\theoremstyle{thmstyleone}%
\newtheorem{theorem}{Theorem}%
\newtheorem{proposition}[theorem]{Proposition}%
\newtheorem{lemma}[theorem]{Lemma}
\newtheorem{corollary}[theorem]{Corollary}
\theoremstyle{thmstyletwo}%
\newtheorem{example}{Example}%
\newtheorem{remark}{Remark}%
\theoremstyle{thmstylethree}%
\newtheorem{definition}{Definition}%
\newcommand{\trace}{\mathrm{tr}}
\newcommand{\volume}{\mathrm{vol}}
\newcommand{\adjugate}{\mathrm{adj}}
\newcommand{\transpose}{\mathsf{T}}
\newcommand{\conv}{\mathrm{conv}}
\newcommand{\LCI}{\mathrm{LCI}}
\newcommand{\CGclos}{\mathrm{CG}\text{-}\mathrm{cl}}
\newcommand{\bigzero}{\mbox{\normalfont\Large\bfseries 0}}
\newcommand{\rvline}{\hspace{-\arraycolsep}\vline\hspace{-\arraycolsep}}
\newcommand\ZZ{\mathbb{Z}}
\newcommand\RR{\mathbb{R}}
\renewcommand\>{\rangle}
\newcommand\<{\langle}
\newcommand{\ba}{\mathbf{a}}
\newcommand{\bb}{\mathbf{b}}
\newcommand{\bc}{\mathbf{c}}
\newcommand{\be}{\mathbf{e}}
\newcommand{\bu}{\mathbf{u}}
\newcommand{\bv}{\mathbf{v}}
\newcommand{\bw}{\mathbf{w}}
\newcommand{\bx}{\mathbf{x}}
\newcommand{\by}{\mathbf{y}}
\newcommand{\bz}{\mathbf{z}}
\newcommand{\ext}{\operatorname{ext}}
\def\defcal#1{%
\expandafter\newcommand\csname cal#1\endcsname{\mathcal{#1}}}
\edef\y{\@Alph\count@}%
\def\defbb#1{%
\expandafter\newcommand\csname bb#1\endcsname{\mathbb{#1}}}
\edef\y{\@Alph\count@}%
\def\defbb#1{%
\expandafter\newcommand\csname bf#1\endcsname{\mathbf{#1}}}
\edef\y{\@alph\count@}%
\begin{document}
\title[Integer Points in Arbitrary Convex Cones]{Integer Points in Arbitrary Convex Cones: \\The Case of the PSD and SOC Cones
\footnote[3]{This article provides more details on the extended abstract published at IPCO 2024 \cite{10.1007/978-3-031-59835-7_8}, including a resolution of a conjecture posed in the extended abstract.}} 

%

\author[1]{\fnm{Jes\'{u}s A.} \sur{De Loera}}\email{jadeloera@ucdavis.edu}
\author[1]{\fnm{Brittney} \sur{Marsters}}\email{bmmarsters@ucdavis.edu}
\author*[1]{\fnm{Luze} \sur{Xu}}\email{lzxu@ucdavis.edu}
\author[2]{\fnm{Shixuan} \sur{Zhang}}\email{shixuan.zhang@tamu.edu}

\affil*[1]{University of California, Davis, Davis CA 95616, USA}
\affil[2]{Texas A\&M University, College Station TX 77843, USA}
\abstract{
We investigate the semigroup of integer points inside a convex cone. We extend classical results in integer linear programming to integer conic programming. We show that the semigroup associated with nonpolyhedral cones can sometimes have a notion of finite generating set with the help of a group action. We show this is true for the cone of positive semidefinite matrices (PSD) and the second-order cone (SOC). Both cones have a finite generating set of integer points, similar in spirit to Hilbert bases, under the action of a finitely generated group. We also extend notions of total dual integrality, Gomory-Chv\'{a}tal closure, and Carath\'{e}odory rank to integer points in arbitrary cones.
\footnotetext{This research is partially based upon work supported by the National Science Foundation under Grant No. DMS-1929284 while the first, third, and fourth authors were in residence at the Institute for Computational and Experimental Research in Mathematics in Providence, RI, during the Discrete Optimization program. The first and second authors were partially supported by the National Science Foundation under grants No.\ DMS 1818969, No.\ DMS 2434665,
and No.\ DMS 2348578. We thank Kurt Anstreicher, Greg Blekherman, Santanu Dey, Chiara Meroni, Bento Natura, Pablo Parrilo, Renata Sotirov, and Andy Sun for comments and support.}
\\
\emph{Keywords:} Integer Points, Convex Cones, Semigroups, Hilbert bases, Conic Programming, Positive Semidefinite Cone, Second-Order Cone }

\maketitle              
%
%
%
%

\section{Introduction}

A semigroup $S$ is a subset of $\mathbb{Z}^N$ that contains $\mathbf{0}$ and is closed under addition. Given a convex cone $C\subseteq \mathbb{R}^N$, the integer points $S_C:=C\cap\mathbb{Z}^N$ form a semigroup which we will call the \emph{conical semigroup} of $C$. In particular, given any compact convex body $K\subseteq\RR^n$, the integer points $\text{cone}(K\times \{1\})\cap\ZZ^{n+1}$ form a conical semigroup. Conical semigroups appear not just in optimization~\cite{MR3835599,berndt2023new}, but also in algebra and number theory~\cite{MR4391780,MR3633776}. Given a convex cone \(C\subseteq\bbR^N\) for \(N\ge1\), we say a subset \(B\subseteq S_C\) is an \emph{integral generating set} of $S_C$ if for any \(s\in S_C\) there exist \(b_1,\dots,b_m\in B\) and \(c_1,\dots,c_m\in\bbZ_{\ge0}\) such that \(s=\sum_{i=1}^{m}c_ib_i\), for some \(m\ge1\). Furthermore, we call $B$ a \emph{conical Hilbert basis} if $B$ is an inclusion-minimal integral generating set. 

When the defining cone $C$ is rational, polyhedral, and pointed, there is abundant literature on the topic. It is well-known that we have a unique finite Hilbert basis in this case \cite{de2013algebraic,schrijver1998theory}. Historically, Hilbert bases have been fundamental in the theory and algorithms of combinatorial optimization. For example, determining if a rational system $A\bx\leq \bb$ is totally dual integral (TDI) is equivalent to checking if, for every face $F$ of the polyhedron $P:=\{\bx:A\bx\leq \bb\}$, the rows of $A$ which are active in a face $F$ form a Hilbert basis for cone($F$)\cite{schrijver1998theory}. 

It is natural to ask, what properties transfer from rational polyhedral cones to arbitrary convex cones? The question we will consider is \emph{do we preserve any notion of finiteness in generating sets for semigroups when we relax the polyhedral condition and instead consider general conical semigroups? Are there Hilbert bases for general cones?}

Unlike for polyhedra, prior work on understanding the lattice points of general convex cones is very sparse. For instance, when is the convex hull of the lattice points a closed set? This has been investigated in \cite{dey_properties_2013}.
In \cite{hemmecke_representation_2007} the authors presented conditions for when a set of lattice points have a \emph{finite} integral generating set, partially emulating polyhedra. Unfortunately, this is rarely the case. Our paper discusses finite generation for conical semigroups and partially extends the polyhedral cone theory of Hilbert bases to nonpolyhedral convex cones. 

Our first contribution to the subject is to introduce a group action, allowing for finite generation with the help of the 
group. Our main results will show that with a group action, 
the semigroups of the cone of positive semidefinite matrices and 
(some of) the second-order cone are in fact finitely generated. Both cones play a key role in modern optimization \cite{barvinok2002course,bental2001lectures}. We also discuss 
some applications of our nonpolyhedral point of view.

In what follows, we denote $\mathrm{GL}(N,\ZZ):=\{U\in\bbZ^{N\times N}:~|\det(U)|=1\}$. 
Here is our new notion of finite generation for conical semigroups.



\begin{definition}
Given a conical semigroup $S_C\subset\ZZ^N$, we call it $(R,G)$-finitely generated if there is a finite subset $R\subseteq S_C$ and a finitely generated subgroup $G\subseteq \mathrm{GL}(N, \ZZ)$ acting on $C$ linearly such that 
\begin{enumerate}
    \item both the cone $C$ and the semigroup $S_C$ are invariant under the group action, i.e., $G\cdot C=C$ and $G\cdot S_C =S_C$, and
    \item every element $s\in S_C$ can be represented as \[s =  \displaystyle\sum_{i\in K}\lambda_i g_{i}\cdot r_i\] for some $r_i \in R$, $g_{i}\in G$, and $\lambda_i \in \mathbb{Z}_{\geq 0}$, $i\in K$, where $K$ is a finite index set.
\end{enumerate}
\end{definition}

When $C$ is a (pointed) rational polyhedral cone, then the conical semigroup $S_C=C\cap\ZZ^N$ is $(R, G)$-finitely generated if we take $R$ to be its Hilbert basis, and $G$ to be the trivial group $\{I_N\}$ of the identity action. Similarly, note that if $S_C$ is an $(R,G)$-finitely generated semigroup, then $\cup_{r\in R}(G\cdot r)$ is an integral generating set of $S_C$, which is a superset of a conical Hilbert basis.  We call $R$ the set of \emph{roots} of $S_C$, and $\cup_{r\in R}(G\cdot r)$ the set of generators for $S_C$. 

While a nonpolyhedral cone cannot be finitely generated in the usual sense, 
using an infinite (yet finitely generated) group $G$ allows us to extend our understanding beyond the polyhedral case. Because the possibly infinite generators for $S_C$ can be obtained by group action $G$ on a finite set $R$ and $G$ is finitely generated, this allows for the possibility of algorithmic methods.
The well-known Krein-Milman theorem states that any point in a closed pointed cone $C$ can be generated by extreme rays, denoted by $\ext(C)$ \cite{barvinok2002course}. When we restrict to the conical semigroup $S_C$ and nonnegative integer combinations, the primitive integer points on the extreme rays of $C$ must be contained in the set of generators of $S_C$, where an integer point $x=(x_1,\dots,x_N)\in\ZZ^N$ is \emph{primitive} if $\mathrm{gcd}(x_1,\dots,x_N)=1$. We call the integer points of $S_C$ on the extreme rays of $C$ \emph{extreme points}, denoted by $\ext(S_C):=\{y\in S_C:y\in \ext(C)\cap\ZZ^N\}$.
However, as in the polyhedral case, the generators will often include extra nonextreme boundary points or even interior points. We provide the following definition of \emph{sporadic points} which are points that cannot have an extreme point subtracted from them and still remain within the cone.


\begin{definition}\label{def:sporadic}
We say a point $x\in S_C=C\cap \ZZ^N$ is sporadic if there does not exist $y\in \ext(S_C)$ such that $x - y \in S_C$.
\end{definition}
If $x\in S_C$ is sporadic, then $x$ cannot be written as an integer conical combination of extreme points (even though it can be written as a real combination of them). From the definition of sporadic points, we know that all points $x\in S_C$ can be written as an integer conical combination of primitive extreme points and one sporadic point. To show that a semigroup is $(R,G)$-finitely generated, it is sufficient to show that the set of primitive extreme points and sporadic points are finite or can be obtained from a finitely generated group $G$ that acts on a finite set of roots, $R$.

In this work, we are mainly interested in the positive semidefinite cone (PSD) and the second-order cone (SOC).
In Sections~\ref{sec:psd} and~\ref{sec:soc} of this paper, we will present the following two main results pertaining to integer points in the PSD cone, denoted $\calS^n_+(\mathbb{Z})$, and the integer points in the second order cone, denoted $\mathrm{SOC}(n)\cap\ZZ^n$.

\begin{theorem}\label{thm:PSD}
    The conical semigroup of the cone of $n\times n$ positive semidefinite matrices, $\calS^n_+(\mathbb{Z})$, is $(R,G)$-finitely generated by $G\cong\mathrm{GL}(n,\ZZ)$ where $G$ acts on $X\in\calS^n_+(\ZZ)$ by $X\mapsto UXU^\transpose$ for each $U\in\mathrm{GL}(n,\ZZ)$, and by $R$, the union of any single rank-one matrix and a finite subset of the sporadic points. Moreover,
    \begin{enumerate}
        \item If $n\le 5$, then there are no sporadic points. Thus, $R = \{\be_1\be_1^\transpose\}$, where $\be_1$ is the first unit vector.
        \item If $n= 6$, then $R = \{\be_1\be_1^\transpose, M\}$, where $M$ is a single sporadic point defined in Section~\ref{sec:psd} Proposition~\ref{example:n=6}.
    \end{enumerate}
\end{theorem}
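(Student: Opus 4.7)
My plan is to reduce the statement to a finiteness claim about sporadic orbits and then treat the cases $n\le 5$ and $n=6$ separately. I first check that the congruence action $X\mapsto UXU^\transpose$ of $G\cong\mathrm{GL}(n,\ZZ)$ preserves symmetry, integrality, and positive semidefiniteness (using $v^\transpose UXU^\transpose v=(U^\transpose v)^\transpose X(U^\transpose v)\ge 0$ and $U^{-1}\in\mathrm{GL}(n,\ZZ)$), so that $G\cdot C=C$ and $G\cdot S_C=S_C$. Since any primitive vector $v\in\ZZ^n$ can be completed to a $\ZZ$-basis of $\ZZ^n$, equivalently $v=U\be_1$ for some $U\in G$, the orbit $G\cdot(\be_1\be_1^\transpose)$ equals $\{vv^\transpose:v\in\ZZ^n\text{ primitive}\}$, and every integer point on an extreme ray of the PSD cone is a nonnegative integer multiple of such a primitive one. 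Hence the $G$-translates of $\be_1\be_1^\transpose$ integrally generate $\ext(S_C)$.

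Next, given $X\in\calS^n_+(\ZZ)$, if $X$ is not sporadic then there is a primitive $v$ with $X-vv^\transpose\in S_C$, and the integer trace strictly decreases by $\|v\|^2\ge 1$. Iterating yields a decomposition $X=\sum_i v_iv_i^\transpose+Z$ with $Z$ either zero or sporadic, so the theorem reduces to proving that the sporadic points lie in finitely many $G$-orbits; then $R$ can be chosen as $\{\be_1\be_1^\transpose\}$ together with one representative from each sporadic orbit. To bound sporadic orbits, I treat two subcases. For a positive definite sporadic $X$, the Schur complement gives $X\succeq vv^\transpose$ iff $v^\transpose X^{-1}v\le 1$, so sporadicity is equivalent to $\min_{0\ne v\in\ZZ^n}v^\transpose X^{-1}v>1$; by Hermite's inequality this forces $\det(X)<\gamma_n^n$, and the reduction theory of integer quadratic forms then places $X$ in one of finitely many $G$-orbits. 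For a rank-deficient sporadic $X$, the lattice $\ker X\cap\ZZ^n$ is saturated, so its basis extends to a $\ZZ$-basis of $\ZZ^n$, producing $U\in G$ with $UXU^\transpose$ block-diagonal whose only nonzero block is a positive definite integer matrix $\tilde X\in\calS^r_+(\ZZ)$; a direct computation shows $X$ is sporadic in $\calS^n_+(\ZZ)$ iff $\tilde X$ is sporadic in $\calS^r_+(\ZZ)$, so induction on $n$ closes the finiteness argument.

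Finally, for $n\le 5$ I would invoke the classical theorem (going back to Mordell) that every positive definite integer quadratic form in at most five variables is a sum of squares of integer linear forms, which rules out positive definite sporadic points; the kernel reduction then excludes rank-deficient ones as well. For $n=6$ I would (i) verify that the matrix $M$ of Proposition~\ref{example:n=6} is sporadic by checking $v^\transpose M^{-1}v>1$ for every nonzero $v\in\ZZ^6$, a finite task since the condition can only fail inside a bounded ellipsoid, and (ii) enumerate the Minkowski-reduced positive definite integer $6\times 6$ forms with determinant below $\gamma_6^6$ and show that, up to $G$-action, the orbit of $M$ is the only one that resists subtraction of every primitive rank-one PSD matrix; rank-deficient sporadic points in dimension six are excluded by the $n\le 5$ case via the kernel reduction. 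The main obstacle will be step (ii): controlling the finite but nontrivial list of candidate reduced forms of bounded determinant in dimension six and establishing uniqueness of the $M$-orbit among them.
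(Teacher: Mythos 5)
Your plan follows essentially the same route as the paper: reduce to finiteness of sporadic $G$-orbits, bound the determinant of a positive-definite sporadic via the Hermite constant (your $\det(X)<\gamma_n^n$ is the paper's Corollary~\ref{cor:bounded_det} in a different normalization of $\gamma_n$), invoke reduction theory to get finitely many orbits of bounded determinant, pass rank-deficient cases to a full-rank block in lower dimension, and handle $n\le5$ and $n=6$ via the Mordell/Ko results. Two caveats worth flagging: your claim that every integer point on an extreme ray is a nonnegative integer multiple of some $vv^\transpose$ with $v\in\ZZ^n$ primitive is exactly Proposition~\ref{prop:rank1}, which is not immediate (a rank-one integer PSD matrix could a priori be $aa^\transpose$ with $a$ irrational) and needs the prime-factorization argument given there; and the uniqueness of the sporadic orbit at $n=6$, which you correctly identify as the main obstacle, is not reproved in the paper either but cited to Ko~\cite{ko1939decomposition}.
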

We say that two matrices $X_1, X_2$ are unimodularly equivalent if $X_2 = U\cdot X_1=UX_1U^\transpose$ for some $U\in \mathrm{GL}(n,\ZZ)$. It is easy to see that it defines an equivalence relation for all integer PSD matrices.
Note that the equivalence class of $\be_1\be_1^\transpose$ are all rank-1 integer matrix $\bx\bx^\transpose$ for some primitive integer vector $\bx\in\ZZ^n$.
An interpretation of Theorem~\ref{thm:PSD} is that for dimension $n\le 5$, every integer PSD matrix can be represented as the sum of rank-$1$ matrices $\bx\bx^\transpose$ for some primitive integer vector $\bx\in\ZZ^n$.
However, the same result fails for dimension $n = 6$. In this case, we will have that every integer PSD matrix can be represented as the sum of rank-$1$ matrices and one sporadic matrix $Y$, which is unimodularly equivalent to $M$ (this matrix was first found by~\cite{mordell_representation_1937}).
In general, every integer PSD matrix can be represented as the sum of rank-$1$ matrices and one sporadic matrix, which is unimodularly equivalent to a matrix in the finite set $R$.
Regarding prior work that inspired us, we mention that \cite{letchford_binary_2012} contains a similar rank-1 decomposition structure for PSD $\{0,1\}$ matrices: a PSD $\{0,1\}$ matrix $X\in \calS^n_+(\mathbb{Z})\cap\{0,1\}^{n\times n}$ satisfies $X=\sum_{i\in K} \bx_i \bx_i^\transpose$ for $\bx_i\in\{0,1\}^n$, where $K$ is a finite index set.
Similarly, \cite{demeijer2023integrality} extends the results to PSD $\{0,\pm1\}$ matrices:  a PSD $\{0,\pm1\}$ matrix $X\in \calS^n_+(\mathbb{Z})\cap\{0,\pm1\}^{n\times n}$ satisfies $X=\sum_{i\in K} \bx_i \bx_i^\transpose$ for $\bx_i\in\{0,\pm1\}^n$, where $K$ is a finite index set. Our results extend to all integer positive semidefinite matrices. 

For the second-order cone family we are able to prove a similar theorem for dimensions up to $n=10$ by extending the construction of the Barning-Hall tree in \cite{PTmatrixgen} for the primitive extreme points (or Pythagorean tuples) to classify the sporadic points. 

\begin{theorem}\label{thm:SOC}
    For dimension $3\le n\leq 10$, the conical semigroup $\mathrm{SOC}(n)\cap\ZZ^n$ is $(R,G)$-finitely generated. The group $G$ and the roots $R$ will be defined in Section~\ref{sec:soc}.
\end{theorem}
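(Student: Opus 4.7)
The plan is to mirror the strategy used for the PSD cone: exhibit a finitely generated subgroup $G\subseteq\mathrm{GL}(n,\ZZ)$ of integer automorphisms of $\mathrm{SOC}(n)$, classify primitive extreme points up to $G$-equivalence, and then bound the sporadic points. Write $q(x)=x_n^2-x_1^2-\cdots-x_{n-1}^2$ for the Lorentzian form defining the cone, and take $G$ to be the subgroup of $U\in\mathrm{GL}(n,\ZZ)$ with $q(Ux)=q(x)$ for all $x$ and $U\cdot\mathrm{SOC}(n)=\mathrm{SOC}(n)$, i.e.\ the integer part of the orthochronous Lorentz group in dimension $n$. A finite generating set extending the Barning--Hall matrices from $n=3$ can be produced explicitly following \cite{PTmatrixgen}. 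The action $x\mapsto Ux$ then preserves $\mathrm{SOC}(n)\cap\ZZ^n$, giving the required invariance.

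Next I would classify primitive extreme points. These are precisely the primitive integer null vectors of $q$ with $x_n>0$, that is, primitive Pythagorean $n$-tuples. Using the generalized Barning--Hall tree of \cite{PTmatrixgen}, one shows that for each $n$ in the stated range they fall into finitely many $G$-orbits, and a representative of each is added to $R$. By the definition of sporadic, any non-sporadic $s\in S_C$ satisfies $s-y\in S_C$ for some $y\in\ext(S_C)$; induction on $s_n$ then reduces every $s\in S_C$ to a sum of primitive extreme points plus at most one sporadic point. It therefore suffices to show that the sporadic points also split into finitely many $G$-orbits and to include one representative per orbit in $R$.

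The principal obstacle is the sporadic classification. My approach would be to normalize a sporadic $s$ under the $G$-action so that $q(s)$ and $s_n$ are minimal in its orbit, and then argue that the impossibility of extracting any extreme point forces these quantities into a finite, explicitly computable range depending on $n$; enumerating the residual candidates by hand or by computer then completes the argument. The dimensional cutoff $n\le 10$ most likely reflects the point at which this bound (or the corresponding enumeration) ceases to be tractable, so that extending the theorem to $n\ge 11$ would require substantially new ideas beyond the Barning--Hall machinery.
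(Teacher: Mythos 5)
Your overall architecture is the same as the paper's: work in the Lorentzian quadratic space, take $G$ to be (a finitely generated subgroup of) the integer orthochronous Lorentz group built from the Barning--Hall reflection of \cite{PTmatrixgen} together with signed permutations, observe that the primitive extreme points are the primitive Pythagorean $n$-tuples and fall into finitely many $G$-orbits by Theorem~1 of \cite{PTmatrixgen}, reduce a general element to a sum of extreme points plus one sporadic point, and then classify sporadic points up to the $G$-action. Up to this point you and the paper agree.

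The gap is in the sporadic classification, which is where all the work lies. You say you would ``normalize a sporadic $s$ so that $q(s)$ and $s_n$ are minimal in its orbit, and argue that the impossibility of extracting any extreme point forces these quantities into a finite, explicitly computable range.'' This is a statement of what must be proved, not a mechanism for proving it: you do not exhibit a normalization procedure, nor an a priori bound on $q(s)$ or $s_n$ at the minimum, nor an argument that the minimum is attained. The paper's concrete device is a height-descent: after sorting $s_1\ge\cdots\ge s_{n-1}\ge0$, one shows (using the ceiling formula $s_n=\lceil\sqrt{s_1^2+\cdots+s_{n-1}^2}\,\rceil$ for sporadic $s$) that $(A_n^+)^{-1}s$ has strictly smaller last coordinate than $s$ \emph{unless} $s$ lies in an explicit, hand-enumerated finite list; iterating the map $s\mapsto(A_n^+)^{-1}s$ followed by re-sorting then terminates at a root in $R$. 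You need some such descent lemma before the ``finite range'' you invoke actually exists.

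Your explanation of the cutoff at $n\le 10$ is also imprecise in a way that hides the real obstruction. The paper's descent step requires $s_1^2+\cdots+s_{n-1}^2\le(s_1+s_2+s_3)^2$, equivalently $s_4^2+\cdots+s_{n-1}^2\le 2(s_1s_2+s_1s_3+s_2s_3)$. After sorting, each of the six products $s_is_j$ ($1\le i<j\le3$, counted with the factor $2$) dominates one term $s_k^2$ with $k\ge4$, so this holds automatically only when there are at most six such terms, i.e.\ $n-4\le 6$, giving exactly $n\le 10$. It is not that the enumeration ``ceases to be tractable''; rather, the inequality driving the descent genuinely fails to hold pointwise once $n\ge 11$, so the paper's method---and any method following your sketch without a replacement for this inequality---breaks at $n=11$ for a structural reason.
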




While it might be tempting to believe that all conical semigroups are $(R,G)$-finitely generated for some finite set $R$ and some subgroup $G\subseteq \mathrm{GL}(N,\mathbb{Z})$, this is not true in general.
In Section \ref{notallconesrgfg},  Example~\ref{ex:NonFinGen} shows a non-rational cone in the plane such that its semigroup is not $(R,G)$-finitely generated. 
For future work we leave open the following natural questions: \emph{which convex cones are (R,G)-finitely generated? Can we characterize them, at least for the plane?}




What is the significance of these results beyond their connections to classical geometry of numbers, lattices, and number theory? (See e.g., \cite{Kaveh+Khovanskii:semigroups}.) We motivate our interest about conical semigroups with two applications in optimization. In what follows, we assume that our cone $C\subset\bbR^N$ is full-dimensional. 


The first application regards the notion of \emph{Chv\'atal-Gomory cuts} which is useful in {\color{blue}branch-and-cut} methods for integer programming.  How much of this can be extended to conic integer programming? Given a linear map $\mathcal{A}:\bbR^m\to\bbR^N$ and $\bc\in\bbR^N$, we define a \emph{linear conical inequality} (LCI) system as 
\[
    \LCI_C(\bc,\calA):=\{\bx\in\bbR^m:\bc-\mathcal{A}(\bx)\in C\}
\]
where $\bc\in\bbZ^N$ and $\mathcal{A}(\bbZ^m)\subseteq\bbZ^N$.
When $C$ is the cone of positive semidefinite matrices in $\calS^n(\bbR)$, then $N=\binom{n+1}{2}$ and
\(
    \calA(\bx)=\sum_{i=1}^{m}x_i A_i
\)
for some matrices $A_1,\dots,A_m\in\calS^n(\bbZ)$.
This is known as a \emph{linear matrix inequality} and defines a \emph{spectrahedron}.
An important concept for LCI is called total dual integrality (TDI), which has been well-known for polyhedral cones $C$~\cite{giles1979total,edmonds1984total} and recently extended to spectrahedral cones~\cite{decarlisilver2020notion,demeijer2022chvatal}.
We use $C^*$ to denote the dual cone of $C$, $\calA^*$ to denote the adjoint linear map of $\calA$, and give a definition for general cones here.
\begin{definition}
    An LCI system \(\bc-\calA(\bx)\in C\) is totally dual integral, if for any \(\bb\in\bbZ^m\), the dual optimization problem
    \begin{align*}
        \min\quad & y(\bc)\quad
        \mathrm{s.t.}\quad \calA^*(y)=\bb, \ 
                           y\in C^*,
    \end{align*}
    whenever feasible, has an integer optimal solution \(y^*\in C^*\cap\bbZ^N\).
\end{definition}

To approximate the convex hull of \(Z:=\LCI_C(\bc,\calA)\cap\bbZ^m\), a commonly used approach (quite similar to its polyhedral version) is to add \emph{Chvátal-Gomory} (CG) cuts, which are defined as follows~\cite{demeijer2022chvatal}. If \(\bu\in\bbZ^m\) is an integral vector and \(v\in\bbR\) a real number such that the linear inequality
$    \bu^\transpose \bx\le v$
is \emph{valid} for all \(x\in\LCI_C(c,\calA)\), then the inequality 
$ \bu^\transpose \bx\le \lfloor v\rfloor$
is valid for all \(\bx\in Z\) and called a CG cut.
There are possibly infinitely many CG cuts so we define the (elementary) CG closure as
\begin{equation}
\CGclos(Z):=\bigcap_{\substack{(\bu,v)\in\bbZ^m\times\bbR:\\S\subseteq\{\bx:\bu^\transpose \bx\le v\}}}\left\{\bx\in\bbR^m:\bu^\transpose \bx\le\lfloor v\rfloor\right\}.
\end{equation}

Now take any linear function $w\in C^*$ such that $w(\bbZ^N)\subseteq\bbZ$.
Then, a CG cut can be generated by
\begin{equation*}
    w\circ\calA(\bx)\le \lfloor w(\bc)\rfloor,
\end{equation*}
as, by definition, \(w\circ\calA(\bbZ^m)\in\bbZ\).
Conversely, if the conical semigroup $S_{C^*}:=C^*\cap\bbZ^N$ is $(R,G)$-finitely generated, then we can get all CG cuts through $R$ and $G$ for our TDI LCI system. This is one of the nice consequences of this property.
\begin{theorem}\label{thm:TDI}
    Suppose $C\subset\bbR^N$ is a full-dimensional convex cone such that $S_{C^*}:=C^*\cap\bbZ^N$ is $(R,G)$-finitely generated, and $\LCI_C(\bc,\calA)$ is TDI.
    Then the CG closure for $Z:=\LCI_C(\bc,\calA)\cap\bbZ^m$ can be described by
    \[
        \CGclos(Z)=\left\{\bx\in\bbR^m:(g\cdot r)^\transpose\calA(\bx)\le\lfloor(g\cdot r)^\transpose \bc\rfloor,\quad\forall\,r\in R,g\in G\right\}.
    \]
\end{theorem}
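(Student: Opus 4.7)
\medskip

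The plan is to prove the equality of the two sets in the statement by showing containment in both directions. Denote the right-hand side by $P$. The easier direction is $\CGclos(Z)\subseteq P$: for every $r\in R$ and $g\in G$ the vector $w:=g\cdot r$ lies in $C^*\cap\bbZ^N$ because the conical semigroup $S_{C^*}$ is closed under the group action. Hence for any feasible $\bx\in\LCI_C(\bc,\calA)$ we have $w^\transpose(\bc-\calA(\bx))\ge 0$, i.e.\ $w^\transpose\calA(\bx)\le w^\transpose \bc$. Since $\bu:=\calA^*(w)$ is integral (taking $\bx$ to range over unit vectors in $\bbZ^m$ shows that $w^\transpose\calA(\be_i)\in\bbZ$), the rounded inequality $w^\transpose\calA(\bx)\le\lfloor w^\transpose \bc\rfloor$ is a legitimate CG cut, so it must hold on $\CGclos(Z)$.

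The harder direction $P\subseteq\CGclos(Z)$ is where TDI and the $(R,G)$-finite generation hypothesis enter. I would take an arbitrary CG cut $\bu^\transpose \bx\le\lfloor v\rfloor$, coming from a valid inequality $\bu^\transpose \bx\le v$ on $\LCI_C(\bc,\calA)$ with $\bu\in\bbZ^m$. By conic LP duality applied to $\max\{\bu^\transpose \bx:\bc-\calA(\bx)\in C\}$, the dual $\min\{y(\bc):\calA^*(y)=\bu,\,y\in C^*\}$ is feasible with optimal value $\le v$. TDI then supplies an \emph{integral} optimal dual solution $y^*\in C^*\cap\bbZ^N=S_{C^*}$ with $\calA^*(y^*)=\bu$ and $y^*(\bc)\le v$.

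Now invoke the $(R,G)$-finite generation of $S_{C^*}$ to write $y^*=\sum_{i\in K}\lambda_i(g_i\cdot r_i)$ with $\lambda_i\in\bbZ_{\ge 0}$, $r_i\in R$, $g_i\in G$. Each constraint of $P$ for $(r_i,g_i)$ reads $(g_i\cdot r_i)^\transpose\calA(\bx)\le\lfloor(g_i\cdot r_i)^\transpose \bc\rfloor$. Multiply the $i$-th inequality by $\lambda_i\in\bbZ_{\ge 0}$ and sum: the left-hand side becomes $(y^*)^\transpose\calA(\bx)=\calA^*(y^*)^\transpose \bx=\bu^\transpose \bx$, while the right-hand side is bounded, using the elementary fact $\sum_i\lambda_i\lfloor a_i\rfloor\le\lfloor\sum_i\lambda_i a_i\rfloor$ for integer $\lambda_i\ge 0$, by $\lfloor(y^*)^\transpose \bc\rfloor\le\lfloor v\rfloor$. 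Thus $\bu^\transpose \bx\le\lfloor v\rfloor$ holds on $P$, and the CG closure is contained in the explicit system.

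The main obstacle I anticipate is the careful application of conic strong duality: strong duality is not automatic for general convex cones and typically requires an interior-point (Slater) condition. In the polyhedral case this is free, but here I would either fold the relevant regularity into the standing hypotheses (full-dimensionality of $C$ and feasibility of the primal/dual), or, more cleanly, extract what I need directly from the TDI definition as stated, since TDI presupposes that whenever the dual is feasible it attains its optimum integrally. A secondary subtlety is verifying that $\calA^*(g\cdot r)\in\bbZ^m$ so that the constructed inequalities genuinely qualify as CG cuts; this follows from $g\cdot r\in\bbZ^N$ together with $\calA(\bbZ^m)\subseteq\bbZ^N$, as noted above.
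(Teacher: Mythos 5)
Your proof is correct and follows the same overall strategy as the paper: use TDI to extract an integral dual optimal solution $y^*$ with $\calA^*(y^*)=\bu$ and $y^*(\bc)\le v$, decompose $y^*=\sum_i\lambda_i(g_i\cdot r_i)$ via the $(R,G)$-finite generation of $S_{C^*}$, and then use the rounding inequality $\sum_i\lambda_i\lfloor a_i\rfloor\le\lfloor\sum_i\lambda_i a_i\rfloor$ for $\lambda_i\in\bbZ_{\ge0}$. The one genuine difference is in how the conclusion is reached: you finish by directly multiplying each constraint of the right-hand side set by $\lambda_i$ and summing, giving $\bu^\transpose\bx=(y^*)^\transpose\calA(\bx)\le\sum_i\lambda_i\lfloor(g_i\cdot r_i)^\transpose\bc\rfloor\le\lfloor v\rfloor$ pointwise. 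The paper instead recasts the dual as a semi-infinite linear program over $\lambda\in\bbR^{\oplus\{(g,r):r\in R,g\in G\}}_{\ge0}$ and invokes weak duality of that semi-infinite LP to bound $\sup\{\bu^\transpose\bx\}$ over the right-hand side. These are logically equivalent (semi-infinite LP weak duality is precisely the ``multiply constraints by nonnegative multipliers and add'' argument), but your version is more elementary and avoids introducing the $\bbR^{\oplus I}$ machinery. Your flagged concern about conic strong duality is also justified: the paper's justification of $\sup=\inf$ and dual attainment via ``full dimensionality of $C$'' is terse and implicitly assumes the dual is feasible so that the TDI hypothesis applies; folding the needed regularity into the standing hypotheses, as you propose, is the more honest fix.
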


The final application has to do with classical notions of integer rank \cite{cook1986integer}. 
Just like the notion of (real) rank of a linear system allows us to bound the number of nonzero entries in a solution of a linear system, we want to know how many elements are needed to decompose any element of a conical semigroup as a linear combination of generators with nonnegative integer coefficients. Suppose that our conical semigroup $S_C=C\cap\ZZ^N$ has an integer generating set $B$. For any element \(s\in S_C\), there exist integer generators \(b_1,\dots,b_m\in B\) and \(\lambda_1,\dots,\lambda_m\in\bbZ_{\ge1}\) such that \(s=\sum_{i=1}^{m}\lambda_ib_i\), for some \(m\ge1\).
The minimum number \(m\) needed in the sum is called the \emph{integer Carathéodory rank} (ICR) of \(s\), and the maximum number over all \(s\in S_C\) is the ICR of the conical semigroup \(S_C\) or the cone \(C\). We show an upper bound on the ICR that depends only on the dimension \(N\).
The proof is almost identical to the popular polyhedral result in~\cite{cook1986integer,sebo1990hilbert} but we must use the extreme point characterization of semi-infinite linear optimization~\cite{charnes1963duality} to allow infinite generating sets.

\begin{theorem}\label{thm:ICR}
    Let \(C\subset\bbR^N\) be an arbitrary pointed convex cone and \(S_C:=C\cap\bbZ^N\).
    Then \(\mathrm{ICR}(S_C)\le 2N-2\).
\end{theorem}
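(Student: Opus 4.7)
To prove Theorem~\ref{thm:ICR}, the plan is to port the classical Seb\H{o} and Cook-Fonlupt-Schrijver argument for polyhedral cones to the general pointed setting, substituting for Carath\'{e}odory's theorem an appeal to the extreme-point theory of semi-infinite linear programming from~\cite{charnes1963duality}. Fix $s\in S_C$ together with any integer generating set $B\subseteq S_C$ (for instance $B=S_C\setminus\{\mathbf{0}\}$), and let $\mathrm{ICR}(s)$ denote the minimum number of distinct elements of $B$ in a nonnegative integer conic representation of $s$.

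First, I would set up the (possibly semi-infinite) linear program
\[
    \min \sum_{b\in B}\lambda_b \quad\text{s.t.}\quad \sum_{b\in B}\lambda_b\,b = s,\ \lambda_b\ge 0.
\]
Because $C$ is pointed, the Charnes-Cooper-Kortanek framework guarantees that the feasible region admits extreme points and that each such extreme point has support of size at most $N$, matching the number of equality constraints. Let $\lambda^{\ast}$ be an optimal basic feasible solution with support $T=\{b\in B:\lambda^{\ast}_b>0\}$, so $|T|\le N$ and $s=\sum_{b\in T}\lambda^{\ast}_b\,b$. This semi-infinite step is exactly what replaces the usual Carath\'{e}odory lemma, which in the polyhedral case is applied to a finite Hilbert basis.

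Next, perform the standard rounding split $s=\sum_{b\in T}\lfloor\lambda^{\ast}_b\rfloor b+\bar s$, where $\bar s:=\sum_{b\in T}\{\lambda^{\ast}_b\}\,b$ still lies in $S_C$, being both integer and a nonnegative real combination of elements of $C$, and moreover sits inside the half-open parallelepiped spanned by $T$ with coefficients in $[0,1)$. The integer part contributes at most $|T|\le N$ distinct generators. For the remainder, I would invoke Seb\H{o}'s parallelepiped argument: any integer generator appearing in a representation of $\bar s$ must itself lie in that parallelepiped, and the LP-optimality of $\lambda^{\ast}$ prevents such generators from being used wastefully; together with a careful accounting of fractional parts, one obtains an integer conic representation of $\bar s$ using at most $N-2$ additional distinct generators. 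Adding the two contributions yields $\mathrm{ICR}(s)\le|T|+(N-2)\le 2N-2$.

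The main obstacle, and the only place the proof substantively differs from~\cite{cook1986integer,sebo1990hilbert}, is justifying that the LP-theoretic step survives the passage to an infinite generating set. Specifically, one must (i) verify existence of an extreme-point optimum for the semi-infinite primal, which follows from the pointedness of $C$ together with the framework of~\cite{charnes1963duality}, and (ii) check that the bounded-support property persists at every extreme point, which is automatic from the rank of the $N$ equality constraints. Once these two facts are secured, the combinatorial rounding and parallelepiped reasoning from the polyhedral case transfers verbatim and produces the claimed bound $2N-2$.
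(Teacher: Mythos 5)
Your overall outline matches the paper's strategy: set up a semi-infinite LP whose extreme points have support at most $N$ (via Charnes--Cooper--Kortanek, replacing Carath\'eodory), split $s$ into integer and fractional parts, and bound each piece. But two substantive points in your sketch fail.

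First, you minimize $\sum_b \lambda_b$, whereas the argument requires \emph{maximizing} it. The place where optimality is used is in bounding the remainder: if $\bar s := s - \sum_{b\in T}\lfloor\lambda^*_b\rfloor b$ has an integer conic representation $\bar s = \sum_{j\in J'}\mu_j c_j$ with $c_j\in B$ and $\mu_j\in\bbZ_{\ge1}$, then $\sum_{b\in T}\lfloor\lambda^*_b\rfloor b + \sum_{j\in J'}\mu_j c_j$ is another feasible point of the LP. With a minimizing $\lambda^*$ this yields $\sum_j \mu_j \ge \sum_b\{\lambda^*_b\}$, a \emph{lower} bound which tells you nothing about $|J'|$. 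Only the maximizing formulation gives the needed $\sum_j\mu_j \le \sum_b\{\lambda^*_b\}$, and since each $\mu_j\ge 1$ this bounds $|J'|$.

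Second, even with the correct objective, the easy estimate $\sum_b\{\lambda^*_b\} < |T|\le N$ gives $|J'|\le N-1$ and hence only $\mathrm{ICR}\le 2N-1$. The claimed bound $2N-2$ requires the strict inequality $\sum_b\{\lambda^*_b\} < N-1$, and this is the nontrivial step you skip by appealing to ``a careful accounting of fractional parts.'' The paper proves it via a separate argument: when $|T|=N$, one may assume (after a support swap) that $\conv\{b:b\in T\}\cap B = \varnothing$; then $s'':=\sum_{b\in T}(1-\{\lambda^*_b\})b$ is an integer point of $C$, and plugging a small perturbation $s = \delta s'' + \sum_b(\lambda^*_b - \delta(1-\{\lambda^*_b\}))b$ into the \emph{max}-optimality yields $\sum_b(1-\{\lambda^*_b\})\ge 1$, with equality leading to $s''\in \conv\{b\}\cap B$, a contradiction. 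This is precisely the Seb\H{o}-style argument, and your sketch does not supply a substitute for it. (Relatedly, your side claim that any generator appearing in a representation of $\bar s$ must lie in the half-open parallelepiped spanned by $T$ is not true in general and is not what the parallelepiped argument actually uses; the real mechanism is the total-coefficient bound from max-optimality described above.)
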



\section{The Positive Semidefinite (PSD) Cone}\label{sec:psd}
Let \(\calS^n(\bbZ)\) (resp.\ \(\calS^n(\bbR)\)) denote the set of \(n\times n\) symmetric matrices of integer (resp.\ real) entries.
For a matrix \(X\in\calS^n(\bbZ)\), we say that \(X\) is PSD (denoted as \(X\succeq0\)) if and only if it is so when regarded as a real matrix \(X\in\calS^n(\bbR)\). We denote $\calS^n_+(\bbZ)$ as the set of integer PSD matrices.

The group \(\mathrm{GL}(n,\ZZ)\) embeds into \(\mathrm{GL}(N,\ZZ)\) as follows.
Given a matrix \(U\in\mathrm{GL}(n,\ZZ)\) and any \(X\in\calS^n(\bbZ)\), we define the action \(U\cdot X:=UXU^\transpose\).
This action is a linear map and takes integer points in \(\ZZ^N\) to integer points, and thus can be represented by the multiplication with a matrix in \(\mathrm{GL}(N,\ZZ)\).
{\color{blue}
It is well-known that this group \(\mathrm{GL}(n,\ZZ)\) is finitely generated by three matrices~\cite{trott1962pair}:
$$
\begin{bmatrix}
    0 & 1 & 0 & \dots & 0 & 0\\
    0 & 0 & 1 & \dots & 0 & 0\\
     &  &  & \dots &  & \\
    0 & 0 & 0 & \dots & 0 & 1\\
    1 & 0 & 0 & \dots & 0 & 0\\
\end{bmatrix},
\begin{bmatrix}
    1 & 0 & 0 & \dots & 0 & 0\\
    1 & 1 & 0 & \dots & 0 & 0\\
     &  &  & \dots &  & \\
    0 & 0 & 0 & \dots & 1 & 0\\
    0 & 0 & 0 & \dots & 0 & 1\\
\end{bmatrix},
\begin{bmatrix}
    0 & 1 & 0 & \dots & 0 & 0\\
    1 & 0 & 0 & \dots & 0 & 0\\
     &  &  & \dots &  & \\
    0 & 0 & 0 & \dots & 1 & 0\\
    0 & 0 & 0 & \dots & 0 & 1\\
\end{bmatrix}.
$$
}
For simplicity, we keep the notation \(U\cdot X\) for the action of \(U\in\mathrm{GL}(n,\ZZ)\) on \(X\) without explicitly specifying its representation in \(\mathrm{GL}(N,\ZZ)\).

\subsection{Lemmas for $n\le 5$ and $n=6$}
The following \emph{integer rank-1 decomposition} for PSD integer matrices is studied in \cite{mordell_representation_1937}.
We recast their arguments with a modern geometric perspective, and use it to extend the notion of $(R,G)$-finite generation to the PSD cone.
\begin{lemma}\label{lem:n<=5}
    If $n\le 5$, then for any $X\in \calS_+^n(\ZZ)$, we can find a finite index set $K$ and vectors $\bx_i\in\ZZ^n$, $i\in K$ such that
    \begin{equation}\label{eqn:rank1}
        X = \sum_{i\in K} \bx_i\bx_i^\transpose.
    \end{equation}
\end{lemma}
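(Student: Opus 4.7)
The plan is to argue by induction on $n$, combining a peeling argument from the geometry of numbers with classical reduction theory of integral quadratic forms. The base case $n=1$ is Lagrange's four-square theorem: every nonnegative integer is a sum of squares of integers.

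For the inductive step at $n \ge 2$, I first reduce to the positive definite case. If $\det X = 0$, the kernel of $X$ is a rational subspace, hence contains a primitive integer vector $\mathbf{v}$, which I complete to a $\ZZ$-basis of $\ZZ^n$ via some $U \in \mathrm{GL}(n,\ZZ)$. Then $U^\transpose X U$ has last row and column zero, of the form $\begin{pmatrix} X' & \mathbf{0} \\ \mathbf{0}^\transpose & 0 \end{pmatrix}$ with $X' \in \calS_+^{n-1}(\ZZ)$, and the inductive hypothesis applied to $X'$ yields an integer rank-$1$ decomposition that pulls back (via $U^{-\transpose} \in \mathrm{GL}(n,\ZZ)$) to one for $X$.

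For the positive definite case, the strategy is greedy peeling: repeatedly find a primitive $\mathbf{v} \in \ZZ^n$ with $X - \mathbf{v}\mathbf{v}^\transpose \succeq 0$ and iterate. This condition is equivalent to $\mathbf{v}^\transpose X^{-1} \mathbf{v} \le 1$, i.e., $\mathbf{v}$ lies in the dual ellipsoid $E := \{\mathbf{y} \in \RR^n : \mathbf{y}^\transpose X^{-1} \mathbf{y} \le 1\}$ of volume $V_n \sqrt{\det X}$. Once such $\mathbf{v}$ is found, the matrix determinant lemma gives $\det(X - \mathbf{v}\mathbf{v}^\transpose) = \det X \cdot (1 - \mathbf{v}^\transpose X^{-1} \mathbf{v})$, which is a nonnegative integer strictly smaller than $\det X$. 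Repeated peeling therefore drives the determinant to zero in finitely many steps, reducing to the already-handled singular case.

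The crucial use of $n \le 5$ lies in guaranteeing that $E$ always contains a nonzero lattice point. For large $\det X$, Minkowski's convex body theorem suffices once $V_n \sqrt{\det X} \ge 2^n$. For small $\det X$, the smallness of the Hermite constant $\gamma_n$ in dimensions $n \le 5$ (where $\gamma_n^n \le 8$) closes most remaining cases via Hermite's bound $\min_{\mathbf{v} \in \ZZ^n \setminus 0} \mathbf{v}^\transpose X^{-1}\mathbf{v} \le \gamma_n / \det(X)^{1/n}$. In particular, when $\det X = 1$, one invokes the classical Mordell theorem that every positive definite integer matrix of unit determinant in $n \le 5$ variables is unimodularly equivalent to the identity, giving the decomposition directly as $X = U^\transpose U = \sum_i \mathbf{u}_i \mathbf{u}_i^\transpose$ for the rows $\mathbf{u}_i^\transpose$ of $U$. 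I expect the main obstacle to be handling the intermediate small-determinant regime (e.g.\ $2 \le \det X \le 7$ when $n=5$), where Minkowski's volume bound is not sharp and one must argue via explicit geometric estimates on the minimum of $\mathbf{y}^\transpose X^{-1} \mathbf{y}$ specific to these low dimensions---exactly the place where the proof would break for $n \ge 6$, anticipating the sporadic matrix of Theorem~\ref{thm:PSD}.
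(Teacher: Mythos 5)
Your overall strategy — reduce the singular case to a lower dimension, then peel off rank-one pieces from positive definite $X$ by locating nonzero lattice points in the ellipsoid $K(X)=\{\by:\by^\transpose X^{-1}\by\le 1\}$, with the geometry-of-numbers input coming from the Hermite constant and Minkowski's theorem — matches the skeleton of the paper's argument, and your singular-case reduction and determinant-based termination (the paper instead uses a simpler trace count) are both sound. But your invocations of Lagrange for $n=1$ and of reduction theory for $\det X=1$ are unnecessary, and, more importantly, you have explicitly left a genuine gap that is the heart of the proof: the intermediate small-determinant regime where neither Minkowski's volume bound nor the raw Hermite bound applied to $\by^\transpose X^{-1}\by$ produces a nonzero lattice point.

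The missing idea is an integrality upgrade. Rewrite the ellipsoid as $K(X)=\{\by:\by^\transpose\adjugate(X)\,\by\le\det X\}$ using the \emph{integer} matrix $\adjugate(X)=\det(X)\,X^{-1}$, so that $\by^\transpose\adjugate(X)\,\by\in\ZZ$ for $\by\in\ZZ^n$. Then it suffices for the Hermite bound $\min_{\by\in\ZZ^n\setminus\{\mathbf{0}\}}\by^\transpose\adjugate(X)\,\by\le\bigl(\gamma_n(\det X)^{n-1}\bigr)^{1/n}$ to be \emph{strictly less than} $\det X+1$, rather than at most $\det X$. That condition is equivalent to $\gamma_n<(\det X+1)^n/(\det X)^{n-1}$, and a one-line calculus check shows $(t+1)^n/t^{n-1}$ is minimized over $t>0$ at $t=n-1$ with value $n^n/(n-1)^{n-1}$, which equals $4,\ 6.75,\ 9.48,\ 12.21$ for $n=2,3,4,5$ and exceeds $\gamma_n$ in each case. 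This handles \emph{every} positive determinant uniformly and with no case split — the whole regime you flag as an obstacle evaporates — while failing exactly at $n=6$, where $\gamma_6=64/3>6^6/5^5\approx 14.93$, which is precisely where the sporadic matrix of Proposition~\ref{example:n=6} enters. Without replacing $X^{-1}$ by $\adjugate(X)$ you cannot exploit integrality, and "explicit geometric estimates specific to low dimensions" will not close the gap.
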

%
To restate Definition~\ref{def:sporadic} in the PSD case, we say an integer matrix \(X\in\calS^n(\bbZ)\) is \emph{sporadic} if there does not exist \(\bx\in\bbZ^n\setminus\{\mathbf{0}\}\) such that \(X-\bx\bx^\transpose\succeq0\).
Lemma~\ref{lem:n<=5} is equivalent to the fact that there is no sporadic point in $\calS_+^n(\ZZ)$ when $n\le 5$.

\begin{proposition}\label{prop:finite_termination}
    There is no sporadic point in $\calS_+^n(\ZZ)$ if and only if every positive semidefinite integer matrix in $\calS_+^n(\ZZ)$ has an integer rank-1 decomposition.
\end{proposition}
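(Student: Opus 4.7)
The statement is an equivalence, so the plan is to prove the two implications separately. The reverse direction should follow essentially from unpacking the definitions, while the forward direction requires iterating a greedy subtraction and arguing termination via a simple invariant.

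For the reverse implication $(\Leftarrow)$, I would assume every $X\in\calS^n_+(\bbZ)$ admits a decomposition $X=\sum_{i\in K}\bx_i\bx_i^\transpose$ with $\bx_i\in\bbZ^n$. For any nonzero $X$, the decomposition must contain at least one index $j$ with $\bx_j\neq\mathbf{0}$. Setting $Y:=\bx_j\bx_j^\transpose$, one has $Y\in\ext(S_C)$ since rank-one integer PSD matrices sit on extreme rays of $\calS^n_+$, and
\[
X-Y=\sum_{i\in K,\,i\neq j}\bx_i\bx_i^\transpose\succeq0,
\]
so $X-Y\in S_C$ and $X$ is witnessed to be non-sporadic. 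The trivial case $X=\mathbf{0}$ is handled by convention.

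For the forward implication $(\Rightarrow)$, I would construct the decomposition iteratively. Starting from $X_0:=X$, whenever $X_k\neq\mathbf{0}$ the non-sporadic hypothesis produces some $Y_k\in\ext(S_C)$ with $X_k-Y_k\in S_C$. Since every integer point on an extreme ray of the PSD cone takes the form $Y_k=m_k\bx_k\bx_k^\transpose$ for a primitive $\bx_k\in\bbZ^n\setminus\{\mathbf{0}\}$ and an integer $m_k\geq1$, we get
\[
X_k-\bx_k\bx_k^\transpose\succeq X_k-m_k\bx_k\bx_k^\transpose\succeq 0,
\]
so I would set $X_{k+1}:=X_k-\bx_k\bx_k^\transpose\in\calS^n_+(\bbZ)$ and continue.

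The only thing needing genuine attention is termination, which I expect to be the crux of the proof. Here the key invariant is the trace: $\trace(X_{k+1})=\trace(X_k)-\|\bx_k\|^2$, and since $\bx_k$ is a nonzero integer vector, $\|\bx_k\|^2\geq1$. Because $\trace(X_k)$ is a nonnegative integer that strictly decreases at each step, the iteration must halt in at most $\trace(X)$ steps with $X_m=\mathbf{0}$, producing the decomposition $X=\sum_{k=0}^{m-1}\bx_k\bx_k^\transpose$. I do not anticipate any significant obstacle; the real content of Lemma~\ref{lem:n<=5} lies in actually \emph{verifying} the non-sporadic hypothesis for each $n\le 5$, whereas Proposition~\ref{prop:finite_termination} is merely reformulating non-sporadicity into a finite-termination statement via the trace monovariant.
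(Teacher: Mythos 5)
Your argument is correct and follows essentially the same route as the paper: both directions hinge on the same greedy subtraction scheme, and termination is established by the same trace monovariant, with $\trace(X_k)$ strictly decreasing as a nonnegative integer. The only cosmetic difference is that you pass through the abstract $\ext(S_C)$ formulation and invoke the rank-one structure $Y_k = m_k\bx_k\bx_k^\transpose$ explicitly, whereas the paper works directly from the PSD-specific restatement of sporadicity (nonexistence of $\bx\in\bbZ^n\setminus\{\mathbf{0}\}$ with $X-\bx\bx^\transpose\succeq0$), which sidesteps that bookkeeping.
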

\begin{proof}
    If there is no sporadic point in $\calS_+^n(\ZZ)$, then for every $Y\in\calS_+^n(\ZZ)$, there exists \(\bx\in\bbZ^n\setminus\{\mathbf{0}\}\) such that \(Y-\bx\bx^\transpose\succeq0\).
    For $X\in\calS_+^n(\ZZ)$, we do the following procedure for \(X_0:=X\) (with index \(i\) initialized to 1).
    \begin{enumerate}
        \item Take any \(\bx_i\in\bbZ^n\setminus\{\mathbf{0}\}\) such that \(X_i:=X_{i-1}-\bx_i\bx_i^\transpose\succeq0\).
        \item If \(X=0\), then we have found an integer rank-1 decomposition \(X=\sum_{j=1}^{i}\bx_j\bx_j^\transpose\);
        otherwise set the index \(i\leftarrow i+1\) and go back to step 1.
    \end{enumerate}
    To see that the procedure terminates in finitely many steps, note that the diagonal of \(\bx_i\bx_i^\transpose\) contains at least 1 nonzero entry because \(\bx_i\neq0\).
    Thus the trace \(\trace(X_i)\le\trace(X_{i-1})-1\) for any \(i\ge1\) because the entries are integers.
    The procedure can repeat no more than \(\trace(X)\) times as \(\trace(X_i)\ge0\).
    
    If every $X\in\calS_+^n(\ZZ)$ has an integer rank-1 decomposition $X=\sum_{i\in K}\bx_i\bx_i^\transpose$, then any of \(\bx_i, i\in K\) satisfies the requirement \(X-\bx_i\bx_i^\transpose\succeq0\).
\end{proof}

For any matrix \(X\in\calS^n(\bbR)\), we can define a convex set \(K(X):=\{\bx\in\bbR^n:X-\bx\bx^\transpose\succeq0\}\).
Since \(X-\bx\bx^\transpose\succeq0\) if and only if, for any \(\bv\in\bbR^n\), \(\vert \bv^\transpose \bx\vert^2\le \bv^\transpose X\bv\), we see that \(K(X)\) is a compact convex set that is symmetric about the origin.
This provides another equivalent formulation of the integer rank-1 decomposition.

\begin{proposition}\label{prop:geometric_sporadic}
    For $X\in \calS_+^n(\ZZ)$, $X$ is sporadic if and only if $K(X)\cap \ZZ^n=\{\mathbf{0}\}$.
\end{proposition}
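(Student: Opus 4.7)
The plan is to directly unpack the two definitions and check that they match after accounting for the zero vector. By Definition~\ref{def:sporadic} specialized to the PSD case (as restated just before Proposition~\ref{prop:finite_termination}), a matrix $X\in\calS_+^n(\ZZ)$ is sporadic precisely when there is no $\bx\in\ZZ^n\setminus\{\mathbf{0}\}$ with $X-\bx\bx^\transpose\succeq 0$. By the very definition of $K(X)$, the condition $X-\bx\bx^\transpose\succeq 0$ is equivalent to $\bx\in K(X)$. Therefore sporadicity of $X$ is equivalent to the statement that $(K(X)\cap\ZZ^n)\setminus\{\mathbf{0}\}=\emptyset$.

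To convert this into the claimed equality $K(X)\cap\ZZ^n=\{\mathbf{0}\}$, I would observe that $\mathbf{0}\in K(X)$ always holds, because $X-\mathbf{0}\mathbf{0}^\transpose=X\succeq 0$ by hypothesis, and $\mathbf{0}\in\ZZ^n$. Hence $\mathbf{0}\in K(X)\cap\ZZ^n$ unconditionally, and so $(K(X)\cap\ZZ^n)\setminus\{\mathbf{0}\}=\emptyset$ is equivalent to $K(X)\cap\ZZ^n=\{\mathbf{0}\}$.

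There is no real obstacle here; the statement is an immediate translation between two equivalent formulations, and the only substantive observation is the inclusion of the zero vector, ensured by $X\succeq 0$. The proof is essentially two lines: $X$ sporadic $\iff$ no nonzero integer $\bx$ lies in $K(X)$ $\iff$ $K(X)\cap\ZZ^n=\{\mathbf{0}\}$, with the second equivalence using $\mathbf{0}\in K(X)$.
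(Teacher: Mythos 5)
Your argument is correct and is exactly the immediate translation the paper has in mind — the proposition is stated without proof precisely because it follows by unwinding the definitions of sporadic and of $K(X)$, plus the observation that $\mathbf{0}\in K(X)$ since $X\succeq 0$. Nothing to add.
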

This provides a geometric perspective to our problem.
Note that the set \(K(X)\) is an ellipsoid {\color{blue}(that may fail to be full-dimensional)} because
$$
X\succeq \bx\bx^\transpose
\iff\begin{bmatrix}1 & \bx^\transpose\\ \bx & X\end{bmatrix}\succeq0
\iff \bx^\transpose X^\dagger \bx\le 1, (I-XX^\dagger)\bx=0,
$$
by the positive semidefiniteness of Schur complements, where \(X^\dagger\) denotes the pseudoinverse of \(X\).
In the case where \(X\) has full rank, 
\begin{equation*}
    K(X)=\{\bx\in\bbR^n:\bx^\transpose\adjugate(X)\bx\le\det(X)\}\text{ with }\volume(K(X))=V_n\sqrt{\det(X)}
\end{equation*}
where $\adjugate(X)$ is the adjugate of $X$ satisfying $\adjugate(X) = \det(X)X^{-1}$ and \(V_n:=\pi^{n/2}/\Gamma(\frac{n}{2}+1)\) is the volume of the unit \(n\)-ball.
Note that $K(X)$ is not necessarily full-dimensional if $X$ is rank deficient.
The degenerate case for rank 1 is characterized by the following proposition.
\begin{proposition}\label{prop:rank1}
    Suppose $X\in \calS_+^n(\bbZ)$, and $\mathrm{rank}(X)=1$, then $X = \lambda \bx\bx^\transpose$, where $\bx\in\mathbb{Z}^n$ and $\lambda\in\mathbb{Z}_{\ge 1}$.
\end{proposition}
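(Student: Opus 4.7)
The plan is to start from the spectral picture of a rank-one PSD matrix and progressively restrict the underlying real vector to lie in $\mathbb{Z}^n$ up to an integer square scalar. First I would use the fact that a rank-one PSD matrix $X\in\calS^n_+(\bbR)$ admits a (real) Cholesky-type factorization $X=\bv\bv^\transpose$ for some nonzero $\bv\in\bbR^n$; this is immediate from the spectral theorem since $X$ has exactly one positive eigenvalue $\mu>0$ and a unit eigenvector $\mathbf{u}$, giving $\bv=\sqrt{\mu}\,\mathbf{u}$.

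Next I would exploit the integrality $X_{ij}=v_iv_j\in\bbZ$. Picking any index $i_0$ with $v_{i_0}\neq 0$ (which exists since $\bv\ne 0$), the identity
\[
\frac{v_i}{v_{i_0}}=\frac{v_iv_{i_0}}{v_{i_0}^{2}}=\frac{X_{i,i_0}}{X_{i_0,i_0}}\in\bbQ
\]
shows that $\bv$ is a real scalar multiple of a rational vector. Clearing denominators and extracting the gcd (possibly flipping sign so that the first nonzero entry is positive), there exists a primitive $\bx\in\bbZ^n$ and a real $\alpha>0$ such that $\bv=\alpha\bx$, hence
\[
X=\alpha^{2}\bx\bx^\transpose.
\]
It remains to prove $\lambda:=\alpha^{2}\in\bbZ_{\ge1}$.

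For this final step I would use primitivity of $\bx$ via Bezout: choose $a_1,\dots,a_n\in\bbZ$ with $\sum_i a_ix_i=1$. Then for each $j$,
\[
\sum_{i=1}^{n}a_iX_{ij}=\alpha^{2}x_j\sum_{i=1}^{n}a_ix_i=\alpha^{2}x_j,
\]
so $\alpha^{2}x_j\in\bbZ$ for every $j$. Applying Bezout once more,
\[
\alpha^{2}=\alpha^{2}\sum_{j=1}^{n}a_jx_j=\sum_{j=1}^{n}a_j(\alpha^{2}x_j)\in\bbZ.
\]
Because $X$ has rank $1$ and is PSD, $X\neq 0$ forces $\alpha^{2}>0$, hence $\lambda=\alpha^{2}\ge 1$.

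The only mildly delicate point, and what I expect to be the main (and essentially only) obstacle, is the last step: showing $\alpha^{2}\in\bbZ$ rather than merely $\alpha^{2}\in\bbQ$. This is where primitivity of $\bx$ is crucial, and the double application of Bezout handles it cleanly. Everything else is routine manipulation of the factorization $X=\bv\bv^\transpose$.
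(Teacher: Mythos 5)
Your proof is correct, and it takes a genuinely different route from the paper's. The paper also begins with $X=\ba\ba^\transpose$ and exploits $a_ia_j\in\bbZ$, but it then performs a case split (all $a_i$ integral versus all $a_i$ irrational) and, in the irrational case, runs a parity argument on the prime factorizations of the integers $k_i=a_i^2$: from $k_ik_j$ being a perfect square for all $i,j$, it deduces that for each prime $p_\ell$ the exponent of $p_\ell$ in $k_i$ has a fixed parity across $i$, extracts $\lambda=\prod_{\ell\in I}p_\ell$ over the odd-parity primes, and checks $k_i/\lambda$ is a perfect square. Your argument instead shows directly that $\bv$ has a rational direction via the ratio trick $v_i/v_{i_0}=X_{i,i_0}/X_{i_0,i_0}$, normalizes to a primitive $\bx\in\bbZ^n$, and then uses Bezout twice to conclude $\alpha^2\in\bbZ$. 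Your version is shorter, avoids the case split and the prime-factorization combinatorics, and handles zero entries of $\ba$ without the paper's remark about passing to a submatrix; the paper's version is slightly more explicit about how the square-free scalar $\lambda$ is assembled from the $a_i^2$, which is informative but not needed for the result.
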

\begin{proof}
    As $X\succeq0$ and $\mathrm{rank}(X)=1$, we can assume that $X=\ba\ba^\transpose$ for some $\ba\in\mathbb{R}^n$. Because $X\in\calS^n(\bbZ)$, we have $a_ia_j\in\mathbb{Z}$ for $i,j\in[n]$. In particular, $a_i^2\in\mathbb{Z}$. Denote $k_i:=a_i^2\in\mathbb{Z}_{\ge 0}$. 
    Without loss of generality, we can assume that $k_i\ge 1$, i.e., $a_i\ne 0$, otherwise, we can just consider the submatrix corresponding to the nonzero $k_i$.

    Suppose that there exists some $a_i\in\mathbb{Z}\setminus \{0\}$ and $a_j\in\{\pm\sqrt{k_j}\}\notin\mathbb{Q}$. Then $a_ia_j\notin\mathbb{Q}$, a contradiction. Therefore, we must have either $a_i\in \mathbb{Z}$ for all $i$ or $a_i\notin \mathbb{Q}$ for any $i$. 
    
    If $a_i\in\mathbb{Z}$ for all $i$, then the result holds with $\lambda=1$ and $x=a$.
    
    If $a_i\notin \mathbb{Q}$ for all $i$, i.e., $k_i$ is not a square. 
    Because $a_ia_j\in\mathbb{Z}$, we have $\sqrt{k_ik_j}\in\mathbb{Z}$, which implies that $k_ik_j=t_{ij}^2$ for some integer $t_{ij}$. Suppose that $p_1,\dots, p_s$ are all the prime factors in the decompositions of $k_i$, $i\in[n]$ such that $k_i=\prod_{\ell=1}^s p_\ell^{\alpha_\ell^i}$, for some $\alpha_\ell^i\in\mathbb{Z}_{\ge 0}$. We have $k_ik_j=\prod_{\ell=1}^s p_\ell^{\alpha_\ell^i+\alpha_\ell^j}=t_{ij}^2$, which implies that $\alpha_\ell^i+\alpha_\ell^j$ is even. Therefore, for a fixed $\ell$, either $\alpha_\ell^i$ is even for all $i\in[n]$ or $\alpha_\ell^i$ is odd for all $i\in[n]$. Let $I:=\{\ell\in[s]: \alpha_\ell^i~\text{is odd}\}$ and $\lambda =\prod_{\ell\in I} p_\ell$. We have $k_i/\lambda$ is a square, thus $X=\lambda \bx\bx^\transpose$ for $x=a/\sqrt{\lambda}$, where $ x_i=\sqrt{k_i/\lambda}\in\mathbb{Z}$.
\end{proof}
From Proposition~\ref{prop:rank1}, we can directly prove the case for $n=2$ using Minkowski's First Theorem (for example, see \cite{de2013algebraic}): if $\mathrm{vol}(K(X))>4$, then there is a nonzero integral point in $K(X)$.
\begin{proposition}\label{prop:n=2}
    Lemma~\ref{lem:n<=5} holds for $n=2$.
\end{proposition}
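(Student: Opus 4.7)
The plan is to combine Proposition~\ref{prop:finite_termination} with the geometric reformulation in Proposition~\ref{prop:geometric_sporadic}, so that it suffices to show that every $X\in\calS^2_+(\ZZ)$ admits a nonzero integer point in $K(X)$. The iterative peeling procedure in the proof of Proposition~\ref{prop:finite_termination} then terminates in at most $\trace(X)$ steps and delivers the rank-1 decomposition. First I would dispose of the degenerate cases: if $X=0$ the empty sum works, and if $\rank(X)=1$ then Proposition~\ref{prop:rank1} immediately gives $X=\lambda \bx\bx^\transpose=\sum_{i=1}^\lambda \bx\bx^\transpose$ with $\bx\in\ZZ^2$ and $\lambda\in\ZZ_{\ge1}$.

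The remaining case is $\rank(X)=2$, in which $K(X)=\{\bx\in\RR^2:\bx^\transpose\adjugate(X)\bx\le\det(X)\}$ is a full-dimensional compact symmetric convex body of volume $\pi\sqrt{\det(X)}$. If $\det(X)\ge 2$, then $\volume(K(X))\ge\pi\sqrt{2}>4=2^2$, and Minkowski's First Theorem, exactly as cited in the excerpt, yields a nonzero $\bx\in K(X)\cap\ZZ^2$.

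The main obstacle is the boundary case $\det(X)=1$, where $\volume(K(X))=\pi<4$ and Minkowski's bound fails to apply. To handle it I would exploit that $\adjugate(X)=\begin{pmatrix}c&-b\\-b&a\end{pmatrix}$ is itself an integer positive definite symmetric matrix with $\det=1$. The quadratic form $q(\bx):=\bx^\transpose\adjugate(X)\bx$ therefore takes positive integer values on $\ZZ^2\setminus\{\mathbf{0}\}$, and Hermite's bound for binary positive definite forms gives $\min_{\bx\in\ZZ^2\setminus\{\mathbf{0}\}}q(\bx)\le\sqrt{4/3}<2$. The minimum is thus exactly $1=\det(X)$, and any minimizer $\bx$ lies in $K(X)\cap\ZZ^2$. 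Equivalently, one can invoke Lagrange's reduction theory: the only reduced positive definite integer binary form of determinant $1$ is $x^2+y^2$, so $X$ is unimodularly equivalent to $I_2$, and writing $X=UU^\transpose$ with $U=[\bu_1,\bu_2]\in\mathrm{GL}(2,\ZZ)$ directly exhibits the decomposition $X=\bu_1\bu_1^\transpose+\bu_2\bu_2^\transpose$.

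Combining the two cases, $K(X)\cap\ZZ^2\ne\{\mathbf{0}\}$ whenever $X\in\calS^2_+(\ZZ)$ has rank two, so no sporadic point exists in $\calS^2_+(\ZZ)$. By Proposition~\ref{prop:finite_termination} this is equivalent to the integer rank-1 decomposition asserted in Lemma~\ref{lem:n<=5} for $n=2$.
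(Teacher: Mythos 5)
Your proof is correct, and for the only nontrivial branch it takes a genuinely different route from the paper. The reductions to rank one (Propositions~\ref{prop:rank1}, \ref{prop:finite_termination}, \ref{prop:geometric_sporadic}) and the $\det(X)\ge 2$ case by a direct application of Minkowski's First Theorem match the paper exactly. Where you diverge is $\det(X)=1$: the paper handles this by a scaling trick, replacing $K(X)$ by $\tilde K(X):=\sqrt{2-\epsilon}\cdot K(X)$ and observing that because $\bx^\transpose\adjugate(X)\bx$ is integer-valued on $\ZZ^2$ no new lattice points appear, so $\volume(\tilde K(X))=(2-\epsilon)\pi>4$ lets Minkowski apply after all. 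You instead invoke the Hermite constant $\gamma_2=4/3$ (so $\min_{\bx\neq0}\bx^\transpose\adjugate(X)\bx\le\sqrt{4/3}<2$, forcing the integer minimum to be $1=\det(X)$), or, equivalently, Lagrange's reduction of binary quadratic forms to conclude $X$ is unimodularly equivalent to $I_2$. Both are valid; in fact your Hermite argument is precisely the specialization to $n=2$ of the paper's later proof of Lemma~\ref{lem:n<=5} for general $n\le5$, whereas the paper's Proposition~\ref{prop:n=2} is deliberately a self-contained warm-up needing only Minkowski. Your Lagrange alternative is the sharpest of the three: it shows a determinant-one $X$ decomposes as exactly two rank-one summands $\bu_1\bu_1^\transpose+\bu_2\bu_2^\transpose$ from a single unimodular factorization $X=UU^\transpose$, a structural conclusion the volume arguments do not provide.
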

\begin{proof}
For $n=2$. Let $X = \begin{bmatrix} a_{11} & a_{12}\\ a_{12} & a_{22}\end{bmatrix}\succeq 0$, where $a_{11},a_{12},a_{22}\in\mathbb{Z}$, which implies that $a_{11}\ge0, a_{22}\ge0, a_{11}a_{22}-a_{12}^2\ge0$. By Proposition~\ref{prop:rank1}, we only need to consider the case when $X\succ 0$, i.e., $\det(X)=a_{11}a_{22}-a_{12}^2\ge1, a_{11}\ge 1, a_{22}\ge 1$.
Then, 
$$
\begin{aligned}
K(X)&:=\{\bx\in\mathbb{R}^2: X-\bx\bx^\transpose\succeq 0\}\\
&=\{\bx\in\mathbb{R}^2: a_{11}-x_1^2\ge 0, a_{22}-x_{2}^2\ge0, (a_{11}-x_1^2)(a_{22}-x_2^2)-(a_{12}-x_1x_2)^2\ge0\}.
\end{aligned}
$$
We claim that $K(X)=\{\bx\in\mathbb{R}^2: (a_{11}-x_1^2)(a_{22}-x_2^2)-(a_{12}-x_1x_2)^2\ge0\}.$ 
To see this, note that 
\begin{align*}
    0\le(a_{11}-x_1^2)(a_{22}-x_2^2)-(a_{12}-x_1x_2)^2 &= (a_{11}a_{22} - a_{12}^2) - a_{22}x_1^2 +2 a_{12}x_1x_2 - a_{11}x_2^2\\
    & = \frac{a_{11}a_{22} - a_{12}^2}{a_{11}}(a_{11} - x_1^2) - a_{11}(x_2-\frac{a_{12}}{a_{11}}x_1)^2
\end{align*}
which together with $a_{11},a_{22},a_{11}a_{22}-a_{12}^2>0$ implies that $a_{11}-x_1^2\ge0$.
Similarly,
\begin{align*}
    0\le(a_{11}-x_1^2)(a_{22}-x_2^2)-(a_{12}-x_1x_2)^2 = \frac{a_{11}a_{22} - a_{12}^2}{a_{22}}(a_{22} - x_2^2) - a_{22}(x_1-\frac{a_{12}}{a_{22}}x_2)^2
\end{align*}
implies that $a_{22}-x_2^2\ge0$, which shows the claim.

Since $X$ has full rank, $K(X)=\{\bx\in\mathbb{R}^2: \det(X) - a_{22}x_1^2 +2 a_{12}x_1x_2 - a_{11}x_2^2\ge0\}$ is a centrally symmetric ellipsoid with area $\pi \sqrt{\det(X)}$.

If $\det(X)\ge 2$, then $\mathrm{vol}(K(X))\ge \sqrt{2}\pi> 4$, we know that $K(X)\cap\mathbb{Z}^2\ne\emptyset$ by Minkowski's First Theorem.

If $\det(X)=1$, then $K(X)=\{x\in\mathbb{R}^n: a_{22}x_1^2 -2 a_{12}x_1x_2 + a_{11}x_2^2\le1\}$. Because $a_{22},a_{12},a_{11}\in\mathbb{Z}$, we have $K(X)\cap\mathbb{Z}^2 = \tilde{K}(X)\cap\mathbb{Z}^2$, where $\tilde{K}(X)=\sqrt{2-\epsilon}\cdot K(X)=\{x\in\mathbb{R}^n: a_{22}x_1^2 -2 a_{12}x_1x_2 + a_{11}x_2^2\le 2-\epsilon\}$ for any $0<\epsilon<1$. Therefore, $\mathrm{vol}(\tilde{K}(X))=(2-\epsilon)\cdot\pi>4$ when we take any $\epsilon<2-\frac{4}{\pi}$. Therefore, $\tilde{K}(X)\cap\mathbb{Z}^2$ is nonempty by Minkowski's First Theorem so neither is $K(X)\cap\ZZ^2$.
\end{proof}
%
In the degenerate case, we can reduce the problem to one involving full-rank matrices of some lower dimension.
\begin{lemma}\label{lem:reduce_rank}
Let $X \in\calS^n(\bbZ)$.
If $r=\mathrm{rank}(X)<n$, then $X$ is unimodularly equivalent to
$$
\begin{bmatrix}
    \mathbf{0} & \mathbf{0}\\
    \mathbf{0} & \hat{X}
\end{bmatrix},
$$
for some $\hat{X}\in\mathbb{Z}^{r\times r}$, $\mathrm{rank}(\hat{X})=r$.
\end{lemma}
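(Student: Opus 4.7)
The plan is to reduce this lemma to a standard fact about saturated sublattices of $\bbZ^n$: I want to build a single unimodular change of coordinates whose first $n-r$ rows annihilate $X$. First I would consider the integer kernel
\[
    L:=\ker(X)\cap\bbZ^n,
\]
and observe that $L$ is a \emph{saturated} sublattice of $\bbZ^n$, i.e., if $k\bv\in L$ for some $k\in\bbZ\setminus\{0\}$ and $\bv\in\bbZ^n$, then $\bv\in L$. This is immediate because $X(k\bv)=kX\bv=\mathbf{0}$ and $\bbZ^n$ is torsion-free, so $X\bv=\mathbf{0}$. Since $\ker(X)$ has dimension $n-r$ over $\bbR$, the lattice $L$ has rank $n-r$.

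Next I would invoke the elementary divisor theorem (equivalently, Smith normal form): any saturated sublattice of $\bbZ^n$ of rank $n-r$ admits a $\bbZ$-basis $\bw_1,\dots,\bw_{n-r}$ that can be extended to a full $\bbZ$-basis $\bw_1,\dots,\bw_n$ of $\bbZ^n$. Assembling these vectors as the rows of a matrix $U\in\bbZ^{n\times n}$ then yields $U\in\mathrm{GL}(n,\bbZ)$, because a matrix whose rows form a $\bbZ$-basis of $\bbZ^n$ has determinant $\pm 1$.

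Finally, I would verify the block form by direct computation. The $(i,j)$-entry of $U\cdot X=UXU^\transpose$ equals $\bw_i^\transpose X\bw_j$, which vanishes whenever $i\le n-r$ or $j\le n-r$ because $X\bw_i=\mathbf{0}$ for those indices (and by the symmetry of $X$). Thus $UXU^\transpose$ has the stated block form with an $r\times r$ integer bottom-right block $\hat{X}$, and since $U$ is invertible over $\bbR$, $\rank(\hat{X})=\rank(UXU^\transpose)=\rank(X)=r$.

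The one step where I would take the most care is the saturation argument underpinning the basis extension. Without saturation, a $\bbZ$-basis of $L$ need not extend to a $\bbZ$-basis of $\bbZ^n$, because the quotient $\bbZ^n/L$ could have torsion, and then the resulting matrix $U$ would fail to be unimodular. Once saturation is secured from the integrality of $X$ and the torsion-freeness of $\bbZ^n$, everything else is a routine computation.
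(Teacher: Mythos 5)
Your proof is correct and follows essentially the same route as the paper: both hinge on observing that the integer kernel is a primitive (saturated) sublattice of $\bbZ^n$ and therefore a basis of it extends to a $\bbZ$-basis of $\bbZ^n$, yielding a unimodular $U$ that block-diagonalizes $X$. The only difference is cosmetic: you perform the basis extension for the full rank-$(n-r)$ kernel lattice in one step, whereas the paper splits off a single primitive kernel vector and iterates; your version is slightly more streamlined but mathematically identical.
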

\begin{proof}
    If $\mathrm{rank}(X)<n$, then there exists a primitive vector $\bz\in\mathbb{Z}^n$ in the subspace $N:=\{\by\in\bbR^n:~X\by=\bf0\}$.
    The sublattice $\Lambda:=\ZZ^n\cap N$ is primitive and thus a basis of $\Lambda$ containing $\bz$ can be extended to a basis of $\bbZ^n$, which we denote as $U=[\bz,\bu_2,\dots,\bu_n]$~\cite[Chapter 2, Lemma 4]{nguyen2010lll}. Because $U$ is a basis of $\bbZ^n$, we know that $|\det(U)|=1$, i.e., $U$ is unimodular. Then
    $$
    U^\transpose X U = \begin{bmatrix}
    0 & 0\\
    0 & \hat{X}
    \end{bmatrix}.
    $$
    Iterating this process until $\hat{X}$ is positive definite, i.e., $\mathrm{rank}(\hat{X})=r$.
\end{proof}
Notice that if $X_1, X_2$ are unimodularly equivalent, then $K(X_1)\cap\bbZ^n\ne\{\mathbf{0}\}$ if and only if $K(X_2)\cap\bbZ^n\ne\{\mathbf{0}\}$.
Thus our problem expects an answer under the unimodular equivalence of integer matrices in $\calS_+^n(\ZZ)$.

The scaling of $K(X)$ into $\tilde{K}(X)$ (while preserving the integer points) in the proof for Proposition~\ref{prop:n=2} results in
\[\volume(\tilde{K}(X))<V_n\sqrt{\det(X)}\cdot\biggl(\frac{\det(X)+1}{\det(X)}\biggr)^{n/2}\] 
where $V_n$ is a constant dependent only on the dimension $n$, and the right-hand side can be approached arbitrarily.
When \(n=3\), \(V_n\approx 4.189\), the right-hand side becomes \(2^{3/2}\approx 2.828\), \(\sqrt{2}\cdot(3/2)^{3/2}\approx 2.598\), \(\sqrt{3}\cdot(4/3)^{3/2}\approx 2.667\) for \(\det(X)=1,2,\) and \(3\), respectively, and greater than 2 for \(\det(X)\ge 4\).
Thus \(\volume(\tilde{K}(X))>8\) so \(\tilde{K}(X)\cap\bbZ^3\neq\{0\}\) by Minkowski's First Theorem.

To prove Lemma~\ref{lem:n<=5}, we need to use a more sophisticated method based on the \emph{Hermite constant}~\cite{schurmann2009computational}
$$
\gamma_n:=\left(\max_{A\succ 0}\frac{\lambda_1(A)}{(\det(A))^{\frac{1}{n}}}\right)^n,\text{ where }\lambda_1(A) = \min_{\bx\in\bbZ^n\setminus\{\bf0\}}(\bx^\transpose A \bx).
$$
%
%
%
\begin{remark}
Hermite gives a bound $\gamma_n\le(\frac43)^{\frac{n(n-1)}{2}}$. The exact value of $\gamma_n$ is only known for $n\le 8$ and $n=24$. 

\vspace{1mm}
\begin{center}
    \centering
    \begin{tabularx}{0.5\textwidth}{X|XXXXXXXX}
    \hline
    $n$ & $2$ & $3$ & $4$ & $5$ & $6$ & $7$ & $8$ & $24$ \\
    \hline
    $\gamma_n$ & $\frac{4}{3}$ & $2$ & $2$ & $8$ & $\frac{64}{3}$ & $64$ & $256$ & $4^{24}$\\
    \hline
    \end{tabularx}
\end{center}
\vspace{1.5mm}
\end{remark}
\begin{remark}
From a volume argument by Minkowski's First Theorem, we have 
$$
\min_{\bx\in\bbZ^n\setminus\{\bf0\}}(\bx^\transpose A \bx)\le \frac{4}{\pi}\Gamma(1+\frac{n}{2})^{\frac{2}{n}}\det(A)^{\frac{1}{n}}\sim \frac{2n}{\pi e}\det(A)^{\frac1n},
$$
which is better than the bound given by $\gamma_n\le(\frac43)^{\frac{n(n-1)}{2}}$ when $n$ is large. But this estimation on $\gamma_n$ is not enough to prove Lemma \ref{lem:n<=5} for the dimension $n=4,5$.
\end{remark}
%
\begin{proof}[Proof of Lemma~\ref{lem:n<=5}]
The case $n=1$ follows from Proposition~\ref{prop:rank1}. 
We will show that $K(X)\cap\bbZ^n\ne\{\mathbf{0}\}$ for $2\le n\le 5$, where \(K(X)=\{\bx\in\bbR^n:\bx^\transpose\adjugate(X)\bx\le\det(X)\}\).
By the definition of the Hermite constant, we have
$$
\min_{\bx\in\bbZ^n\setminus\{\bf0\}}(\bx^\transpose \mathrm{adj}(X) \bx)\le (\gamma_n \det(\mathrm{adj}(X)))^{\frac{1}{n}}=(\gamma_n (\det(X))^{n-1})^{\frac{1}{n}}.
$$
For $n=2,3,4,5$, we have
$\frac{n^n}{(n-1)^{n-1}}>\gamma_n$ as
$\frac{2^2}{1^1}=4$, $\frac{3^3}{2^2}\approx 6.75$, $\frac{4^4}{3^3}\approx 9.48$, $\frac{5^5}{4^4}\approx 12.21$, and $\frac{6^6}{5^5}\approx 14.93$.
By taking the derivative with respect to $\det(X)$ for finding extremum value, we know that $\frac{(\det(X)+1)^n}{(\det(X))^{n-1}}\ge \frac{n^n}{(n-1)^{n-1}}$. Thus,
$\gamma_n<\frac{(\det(X)+1)^n}{\det(X)^{n-1}}$.
Therefore,
$$
\min_{\bx\in\bbZ^n\setminus\{\bf0\}}(\bx^\transpose \mathrm{adj}(X) \bx)\le (\gamma_n \det(\mathrm{adj}(X)))^{\frac{1}{n}}=(\gamma_n \det(X)^{n-1})^{\frac{1}{n}}<\det(X) +1.
$$
Because $x^\transpose\adjugate(X)x,
\det(X)\in\mathbb{Z}$ for any $x\in\mathbb{Z}^n$, we have
$$
\min_{\bx\in\bbZ^n\setminus\{\bf0\}}(\bx^\transpose \mathrm{adj}(X) \bx)\le\det(X),
$$
which implies that $K(X)\cap(\mathbb{Z}^n\setminus\{\mathbf{0}\})\ne\emptyset$.
Lemma~\ref{lem:n<=5} now follows from Propositions~\ref{prop:geometric_sporadic} and~\ref{prop:finite_termination}, and Lemma~\ref{lem:reduce_rank}.
\end{proof}
%
The argument used to prove Lemma~\ref{lem:n<=5} fails for $n\ge 6$, but it implies that the determinant of the sporadic matrices is bounded by a constant only dependant on $n$.
For example, in the case of $n=6$, the argument only fails when $3\le \det(X)\le 14$;
for $n=7$, it only fails when $2\le \det(X)\le 56$,
and for $n=8$, it only fails when $1\le \det(X)\le 247$.
We summarize this observation in the following corollary.
\begin{corollary}\label{cor:bounded_det}
    If \(X\in\calS_+^n(\bbZ)\) is sporadic, then \(\det(X)<\gamma_n\).
\end{corollary}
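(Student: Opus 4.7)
The plan is to read this as a corollary of the inequality chain already developed in the proof of Lemma~\ref{lem:n<=5}, simply run in reverse. First I would dispose of the degenerate case: if $\rank(X)<n$, then $\det(X)=0<\gamma_n$ and there is nothing to prove, so I may assume $X\succ 0$.

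Under that assumption $K(X)=\{\bx\in\bbR^n:\bx^\transpose\adjugate(X)\bx\le\det(X)\}$ is a full-dimensional centrally symmetric ellipsoid. By Proposition~\ref{prop:geometric_sporadic}, the sporadic hypothesis on $X$ is equivalent to $K(X)\cap\bbZ^n=\{\mathbf{0}\}$. Since $\bx^\transpose\adjugate(X)\bx\in\bbZ$ for every $\bx\in\bbZ^n$ and $\det(X)$ is a positive integer, this emptiness sharpens into the strict lower bound
\[
    \min_{\bx\in\bbZ^n\setminus\{\mathbf{0}\}}\bx^\transpose\adjugate(X)\bx\;\ge\;\det(X)+1.
\]

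Next I would feed this into the Hermite-constant upper bound that was the engine of Lemma~\ref{lem:n<=5}, namely
\[
    \min_{\bx\in\bbZ^n\setminus\{\mathbf{0}\}}\bx^\transpose\adjugate(X)\bx
    \;\le\;\bigl(\gamma_n\det(\adjugate(X))\bigr)^{1/n}
    \;=\;\bigl(\gamma_n\det(X)^{n-1}\bigr)^{1/n}.
\]
Combining the two and raising to the $n$-th power gives $(\det(X)+1)^n\le\gamma_n\det(X)^{n-1}$, whence $\det(X)^n<(\det(X)+1)^n\le\gamma_n\det(X)^{n-1}$. Dividing through by $\det(X)^{n-1}\ge 1$ then yields $\det(X)<\gamma_n$, as claimed.

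There is no genuine obstacle here: the content of the corollary is already encoded in the inequality chain of Lemma~\ref{lem:n<=5}, used contrapositively to bound $\det(X)$ rather than to produce an integer point inside $K(X)$. The only mild points worth making explicit are the integrality step sharpening $>\det(X)$ to $\ge\det(X)+1$, and the reduction to the full-rank case via Lemma~\ref{lem:reduce_rank} (or, as above, the trivial observation that rank deficiency forces $\det(X)=0$).
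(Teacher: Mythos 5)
Your proof is correct and follows exactly the route the paper intends: you run the Hermite-constant inequality chain from Lemma~\ref{lem:n<=5} in contrapositive form, sharpen via integrality of $\bx^\transpose\adjugate(X)\bx$ and $\det(X)$, and handle the rank-deficient case trivially since $\det(X)=0<\gamma_n$. The paper leaves this proof implicit (the corollary is presented as ``summarizing'' the discussion preceding it), and your write-up is a faithful spelling-out of that implicit argument, including the correct observation that the tighter conclusion $(\det(X)+1)^n\le\gamma_n\det(X)^{n-1}$ is what the paper actually uses to get sharper ranges for $n=6,7,8$, with $\det(X)<\gamma_n$ being the stated weakening.
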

A sporadic matrix for $n=6$ was initially found in~\cite{mordell_representation_1937}. 
%
\begin{proposition}\label{example:n=6}
In $n=6$, the matrix $M$ is sporadic, i.e., $K(M)\cap\mathbb{Z}^n = \{\mathbf{0}\}$.
$$
M=
\left[
\begin{array}{c@{\hskip 1em}c@{\hskip 1em}c@{\hskip 1em}c@{\hskip 1em}c@{\hskip 1em}c}
    2 & 0 & 1 & 1 & 1 & 1 \\[-0.5em]
    0 & 2 & 0 & 1 & 1 & 1 \\[-0.5em]
    1 & 0 & 2 & 1 & 1 & 1 \\[-0.5em]
    1 & 1 & 1 & 2 & 1 & 1 \\[-0.5em]
    1 & 1 & 1 & 1 & 2 & 1 \\[-0.5em]
    1 & 1 & 1 & 1 & 1 & 2
\end{array}
\right]
    \text{ with } \det(M)=3.
$$
\end{proposition}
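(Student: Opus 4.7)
The plan is to suppose there exists a nonzero $\bx\in\ZZ^6$ satisfying $M-\bx\bx^\transpose\succeq0$ and derive a contradiction by progressively narrowing the options for $\bx$. A direct cofactor computation first confirms $\det(M)=3$; by Proposition~\ref{prop:geometric_sporadic} the claim is then equivalent to ruling out every nonzero integer point of the ellipsoid $\{\bx\in\bbR^6:\bx^\transpose\adjugate(M)\bx\le3\}$. The diagonal constraint $(M-\bx\bx^\transpose)_{ii}=2-x_i^2\ge0$ immediately restricts $\bx$ to the cube $\{-1,0,1\}^6$.

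Next I would exploit the $2\times 2$ principal minors. For each pair $(i,j)$ with $M_{ij}=1$, namely every pair except $(1,2)$ and $(2,3)$, the inequality $(2-x_i^2)(2-x_j^2)\ge(1-x_ix_j)^2$ forces $x_ix_j\ne-1$ whenever both $x_i,x_j$ are nonzero. Let $G$ be the graph on $[6]$ whose edges are those with $M_{ij}=1$; then on every connected component of the induced subgraph $G[\mathrm{supp}(\bx)]$ the nonzero entries of $\bx$ share a common sign. Since $G$ is $K_6$ minus only the edges $\{1,2\}$ and $\{2,3\}$, the only supports $S$ for which $G[S]$ is disconnected are $S\in\{\{1,2\},\{2,3\},\{1,2,3\}\}$, and up to global negation and the symmetries of $M$ (see below) these produce just two residual sign-mismatch vectors, $(1,-1,0,0,0,0)$ and $(1,-1,1,0,0,0)$, each of which is killed by the $3\times 3$ principal minor of $M-\bx\bx^\transpose$ on rows and columns $\{1,2,3\}$ (both determinants equal $-1$). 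After this step I may therefore assume $\bx=\chi_S$ is the characteristic vector of some nonempty $S\subseteq[6]$.

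It remains to show $\chi_S\notin K(M)$ for each nonempty $S$. Direct inspection reveals that $M$ is invariant under the transposition $(1\,3)$ and under every permutation of $\{4,5,6\}$, and the order-$12$ group these generate reduces the $63$ subsets to $23$ orbit representatives indexed by the triple $(a,b,c)=(|S\cap\{2\}|,\,|S\cap\{1,3\}|,\,|S\cap\{4,5,6\}|)$. For each representative I would exhibit a principal submatrix of $M-\chi_S\chi_S^\transpose$ with strictly negative determinant. A large block of classes is eliminated uniformly by the $3\times 3$ submatrix on $\{1,2,3\}$, whose determinant equals $-1$ whenever $\{1,2,3\}\subseteq S$; the remaining low-weight classes can be handled either by slightly larger negative principal minors, or via the matrix determinant lemma $\det(M-\chi_S\chi_S^\transpose)=3\bigl(1-\chi_S^\transpose M^{-1}\chi_S\bigr)$ after a one-time computation of $M^{-1}$.

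The main obstacle is the tedious bookkeeping in this final step: the sign and magnitude reductions prune the search dramatically, but a few dozen residual classes must still be individually verified. A cleaner alternative would be to compute $\adjugate(M)$ once and check directly that the associated positive definite integer quadratic form $\bx\mapsto\bx^\transpose\adjugate(M)\bx$ takes values at least $4$ on every nonzero $\bx\in\{-1,0,1\}^6$ compatible with the sign constraints from the second step, reducing the argument to a short bounded check over a small explicit list of candidates.
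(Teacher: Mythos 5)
Your strategy is sound and would establish the result, but it follows a genuinely different and more laborious route than the paper's. You attack $M-\bx\bx^\transpose\succeq0$ head on: the diagonal constraint confines $\bx$ to $\{-1,0,1\}^6$, the $2\times2$ principal minors force sign coherence across the edges of $K_6\setminus\{\{1,2\},\{2,3\}\}$ (with the two exceptional sign-mismatch vectors you correctly identify and kill via the $3\times3$ minor on $\{1,2,3\}$), and the residual $\pm\chi_S$ candidates are reduced by the order-$12$ symmetry group of $M$ to $23$ orbit representatives, each of which would be ruled out by a negative principal minor or by the matrix-determinant-lemma identity $\det(M-\bx\bx^\transpose)=3-\bx^\transpose\adjugate(M)\bx$ (which, thanks to eigenvalue interlacing, is a sharp PSD test for a rank-one downward perturbation). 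All of this is correct, but the final finite check — $19$ orbits beyond the block handled by the $\{1,2,3\}$-minor — is only sketched, not carried out, so the proof as written has a bookkeeping gap that must actually be filled in.

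The paper instead passes immediately to the equivalent condition $\bx^\transpose\adjugate(M)\bx\le\det(M)=3$ and produces an explicit decomposition of the quadratic form as $A_2+A_{13}+A_{456}$, where each piece is a sum of three squares of integer linear forms and, crucially, each $A_i$ is always \emph{even}. Parity then forces $A_2+A_{13}+A_{456}\le2$ with at most one summand nonzero, and a three-case argument shows $\bx=\mathbf{0}$, yielding $\min_{\bx\ne0}\bx^\transpose\adjugate(M)\bx=4$. This buys a much shorter verification with no symmetry reduction and no orbit enumeration; the price is that one must first discover the particular sum-of-squares representation. Your approach is more mechanical and closer to something one could automate, but to be a complete proof you would need to either exhibit the required negative minors for all remaining orbits or simply tabulate $\bx^\transpose\adjugate(M)\bx$ over the surviving candidates, which in the end amounts to the explicit $\adjugate(M)$ computation the paper performs once and then shortcuts with parity.
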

\begin{proof}
We verify that $\min_{\bx\in\bbZ^n\setminus\{\bf0\}}(\bx^\transpose \mathrm{adj}(X) \bx)>\det(X) = 3$.
$$\mathrm{adj}(X)=(\det(X)) X^{-1}=
\begin{bmatrix}
    4 & 3 & 1 & -2 & -2 & -2\\
    3 & 6 & 3 & -3 & -3 & -3\\
    1 & 3 & 4 & -2 & -2 & -2\\
    -2 & -3 & -2 & 4 & 1 & 1\\
    -2 & -3 & -2 & 1 & 4 & 1\\
    -2 & -3 & -2 & 1 & 1 & 4
\end{bmatrix}
$$
Note that $\bx^\transpose \mathrm{adj}(X) \bx = [(x_1+2x_2+x_3-x_4-x_5-x_6)^2 + (x_1+x_2+x_3-x_4-x_5-x_6)^2+x_2^2]+[(x_1-x_3)^2+x_1^2+x_3^2]+[(x_4-x_5)^2+(x_4-x_6)^2+(x_5-x_6)^2]$.
Let $A_2:=(x_1+2x_2+x_3-x_4-x_5-x_6)^2 + (x_1+x_2+x_3-x_4-x_5-x_6)^2+x_2^2$, $A_{13}:=(x_1-x_3)^2+x_1^2+x_3^2$ and $A_{456}:=(x_4-x_5)^2+(x_4-x_6)^2+(x_5-x_6)^2$. Then $\bx^\transpose \mathrm{adj}(X) \bx= A_2+A_{13}+A_{456}$.

Suppose, there exists $x\in\mathbb{Z}^6$ such that $\bx^\transpose \mathrm{adj}(X) \bx\le 3$. We are going to show that $x=0$. Notice that $A_2,A_{13},A_{456}$ are even, which implies that $A_2+A_{13}+A_{456}\le 2$. Then at most one of $A_2,A_{13},A_{456}$ is nonzero.

We consider the following three cases:
\begin{enumerate}
    \item if $A_{13}=0, A_{456}=0$, then $x_1=x_3=0, x_4=x_5=x_6=0$. Because $A_2=6x_2^2\le 2$, we have $x_2=0$.
    \item if $A_{2}=0, A_{456}=0$, then $x_4=x_5=x_6=0$, $x_2=0$, $x_1+x_3=0$. Because $A_{13}=6x_1^2\le 2$, we have $x_1=0$.
    \item if $A_{2}=0, A_{13}=0$, then $x_1=x_3=0$, $x_2=0$, $x_4+x_5+x_6=0$. Because $A_{456}=(x_4-x_5)^2+(2x_4+x_5)^2+(x_4+2x_5)^2=6(x_4^2+x_5^2+x_4x_5)\le 2$, we have $x_4=x_5=x_6=0$.
\end{enumerate}
Therefore, $\min_{\bx\in\bbZ^n\setminus\{\bf0\}}(\bx^\transpose \mathrm{adj}(X) \bx)>3$. For $\bx=\be_1$, $\bx^\transpose \mathrm{adj}(X) \bx=4$, i.e., $\min_{\bx\in\bbZ^n\setminus\{\bf0\}}(\bx^\transpose \mathrm{adj}(X) \bx)=4$.
\end{proof}

Moreover, in~\cite{ko1939decomposition}, it is shown that for $n=6$, $M$ is the unique sporadic matrix under unimodular equivalence. Using this fact, we have the following Lemma.

\begin{lemma}\label{lem:n=6}
    If $n=6$, then for any $X\in \calS_+^n(\ZZ)$,
    \begin{equation*}
        X = \sum_{i\in K} \bx_i\bx_i^\transpose + Y
    \end{equation*}
    for $\bx_i\in\ZZ^n$ and $Y$ unimodularly equivalent to $M$, where $K$ is a finite index set.
\end{lemma}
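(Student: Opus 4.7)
The plan is to reuse the greedy rank-one subtraction procedure from Proposition~\ref{prop:finite_termination} and then classify the residue that remains using the known uniqueness of the sporadic matrix in dimension six. Concretely, I would set $X_0 := X$ and iterate: at step $i \ge 1$, if there exists $\bx_i \in \bbZ^6 \setminus \{\mathbf{0}\}$ with $X_i := X_{i-1} - \bx_i \bx_i^\transpose \succeq 0$, subtract it and continue; otherwise halt. The termination argument is verbatim that of Proposition~\ref{prop:finite_termination}: any nonzero integer vector $\bx_i$ has some component $(\bx_i)_j \neq 0$, forcing $(\bx_i \bx_i^\transpose)_{jj} \ge 1$ and hence $\trace(X_i) \le \trace(X_{i-1}) - 1$; since traces of integer PSD matrices are nonnegative integers, the loop halts in at most $\trace(X)$ steps.

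At termination, set $Y := X_{i^*}$. By construction no integer rank-one piece can be subtracted from $Y$ while preserving positive semidefiniteness, so either $Y = 0$ (a full integer rank-one decomposition has already been obtained) or $Y$ is a sporadic matrix in the sense of Definition~\ref{def:sporadic}, as reformulated for the PSD cone just above Proposition~\ref{prop:finite_termination}. In the latter case I would invoke Ko's uniqueness result~\cite{ko1939decomposition}, cited in the text immediately before the lemma, which asserts that $M$ is the unique sporadic element of $\calS_+^6(\bbZ)$ up to the $\mathrm{GL}(6,\bbZ)$-action $U \cdot X = U X U^\transpose$. Hence $Y$ is unimodularly equivalent to $M$, and $X = \sum_{i=1}^{i^*} \bx_i \bx_i^\transpose + Y$ is the desired decomposition (with the trivial case $Y = 0$ implicit in the statement).

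The only substantive new ingredient beyond the $n \le 5$ argument is Ko's classification; everything else is a direct transcription of Proposition~\ref{prop:finite_termination}. The real obstacle, were Ko's theorem unavailable, would be to enumerate all $\mathrm{GL}(6,\bbZ)$-orbits of sporadic matrices in $\calS_+^6(\bbZ)$. Corollary~\ref{cor:bounded_det} shows that any such matrix has determinant less than $\gamma_6 = 64/3$, turning this in principle into a finite—if nontrivial—search; I would simply cite the completed enumeration from~\cite{ko1939decomposition} rather than redo it.
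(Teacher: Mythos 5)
Your proposal matches the paper's approach exactly: the paper does not give an explicit proof of Lemma~\ref{lem:n=6} but derives it in the same way, running the greedy trace-decreasing subtraction of Proposition~\ref{prop:finite_termination} until a sporadic residue remains and then invoking Ko's classification~\cite{ko1939decomposition} (cited in the sentence immediately preceding the lemma) to identify that residue, up to $\mathrm{GL}(6,\ZZ)$-equivalence, with $M$. Your parenthetical about the $Y=0$ case filling a small imprecision in the statement, and your remark that Corollary~\ref{cor:bounded_det} makes the enumeration finite in principle, are both consistent with the paper's framing.
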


\subsection{Proof of Theorem~\ref{thm:PSD}}
\begin{proof}
    We know that the primitive extreme points are generated from the group $\mathrm{GL}(n,\ZZ)$ that acts on $\{\be_1\be_1^\transpose\}$. 
    Thus we only need to prove that the sporadic points are generated from the group $\mathrm{GL}(n,\ZZ)$ on a finite set $R$.
    
    Corollary~\ref{cor:bounded_det} shows that for any sporadic matrix \(X\), $\det(X)<\gamma_n$.
    By~\cite[Theorem 2.4]{schurmann2009computational}, there exists a constant \(\alpha_n>0\) depending only on \(n\), such that for any positive definite matrix \(X\in\calS^n(\bbZ)\), there is a unimodularly equivalent matrix \(X'\) of \(X\) with diagonal entries satisfy
    \[
        \prod_{i=1}^{n}X'_{ii}\le\alpha_n\det(X')=\alpha_n\det(X)<\alpha_n\gamma_n.
    \] 
    Because \(X'\in\calS^n(\bbZ)\) is positive definite, \(X'_{ii}\ge 1\) and thus is bounded from above.
    From this we see that there are only finitely many possibilities for such \(X'\) because each off-diagonal entry must satisfy \(\vert X'_{ij}\vert^2\le X'_{ii}X'_{jj}\) for any \(1\le i,j\le n\).

    The two special cases $n\le 5$ and $n=6$ follow from Lemma~\ref{lem:n<=5} and~\ref{lem:n=6}.
\end{proof}


\section{The Second-Order Cone (SOC)}\label{sec:soc}


In this section, we use $T_n$ to denote the conical semigroup $\text{SOC}(n)\cap\ZZ^n$ where 
\[\text{SOC}(n):= \left\{ \bx\in\RR^n : 0\leq \sqrt{x_1^2+\dots+x_{n-1}^2}\leq x_n\right\}.\]
The last component $x_n$ of $\bfx\in T_n$ is referred to as the \emph{height} of $\bfx$.
Additionally, for $\ba,\bb\in\RR^n$, consider the bilinear form 
\begin{align*}\label{quadratic form}
    \<\ba,\bb\> := a_1b_1 +a_2b_2 +\dots +a_{n-1}b_{n-1} - a_nb_n.
\end{align*}
In this quadratic space, the reflection in vector $\bw$ is defined as $\bx\rightarrow \bx - 2\frac{\<\bx, \bw\>}{\<\bw,\bw\>}\bw$.

\begin{definition}\label{A_n definition}
     Let $P_{i,j}$ be the permutation matrix that swaps the $i$-th and $j$-th columns and define $Q_k$ be the matrix determined by 
    {\small\begin{equation*}
        (Q_k)_{i,j} =
        \begin{cases}
            -1 & \text{ if } i=j=k \\
            1  & \text{ if } i=j\neq k\\
            0 & \text{ if } i\neq j.
        \end{cases}
    \end{equation*}}

    For $n=3$, let $A_3$ denote the matrix associated with the reflection in the vector $(1,1,1)$.
    For $4 \leq n \leq 10$, let $A_n$ denote the matrix associated with the reflection in the vector $(1,1,1,0,\dots,0,1)$ also associated to this bilinear form: 
    {\small\[
    A_3 = 
    \begin{pmatrix}
    -1 & -2 & 2 \\ -2 & -1 & 2 \\ -2 & -2 & 3 \\    
    \end{pmatrix}, \hspace{3mm}
    A_n 
    = \begin{pmatrix}
            \begin{matrix}
                0 & -1 & -1 \\
                -1 & 0 & -1 \\
                -1 & -1 & 0
            \end{matrix}
        & \rvline & \bigzero  & \rvline 
        & \begin{matrix} 1 \\ 1\\ 1 \end{matrix} \\   
        \hline \bigzero & \rvline 
        & \text{I}_{n-4} &\rvline & \bigzero \\\hline
        \begin{matrix} -1 & -1 & -1 \end{matrix} 
        & \rvline &\bigzero &\rvline & 2 
    \end{pmatrix} 
    \]  }
    We define the matrix $A_n^+ = Q_1Q_2\dots Q_{n-1} A_n$.  Note that $A_n$ is unimodular.
\end{definition}


Elements $\bfs\in T_n$ such that $\<\bfs,\bfs\>=\bf0$ belong to the boundary of $T_n$, and we will denote the set of these points as $\partial T_n$. In number theory, these points are called Pythagorean tuples. 
In \cite{PTmatrixgen}, they proved that the set of primitive Pythagorean tuples, denoted as $\ext^p(T_n)$, is generated by finitely many matrices acting on a finite set $R$ for $3\leq n\leq 10$.

\begin{lemma}[Theorem 1 in \cite{PTmatrixgen}]\label{cass (R,G)}
    For $3\leq n \leq 10$, $\ext^p(T_n) = \cup_{r\in R} G\cdot r$, where the group 
    \[G=\left\< A_n, Q_1, \dots, Q_{n-1}, P_{1,2}, P_{1,3},\dots P_{1,n-1} \right\>\] and the sets 
    \begin{enumerate}
        \item $R =\left\{(1,0,\dots,0,1)^\transpose\right\}$ for $3\leq n <10$,
        \item $R=\left\{(1,0,0,0,0,0,0,0,0,1)^\transpose, (1,1,1,1,1,1,1,1,1,3)^\transpose\right\}$ for $n=10$,
    \end{enumerate}
    where $G$ acts on $R$ by left multiplication. 
\end{lemma}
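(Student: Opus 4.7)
The plan is to follow the classical Berggren--Barning--Hall descent on primitive Pythagorean tuples, adapted to dimension $n$. The claimed equality $\cup_{r\in R}G\cdot r=\ext^p(T_n)$ splits into two containments, and the nontrivial direction is a strictly height-decreasing descent on the last coordinate $s_n$.

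For the easy containment $\cup_{r\in R}G\cdot r\subseteq\ext^p(T_n)$, I would verify that every generator of $G$ preserves all three properties of a primitive Pythagorean tuple. The signed permutations $P_{1,j}$ and $Q_k$ manifestly preserve integrality, primitivity, the Lorentzian form $\<\,\cdot\,,\,\cdot\,\>$, and (after sign corrections) membership in $T_n$. The matrix $A_n$ is the reflection in a vector $\bw$ with $\<\bw,\bw\>\in\{1,2\}$ (equal to $1$ for $n=3$ and $2$ for $n\geq 4$), so the reflection formula $\bx\mapsto\bx-\frac{2\<\bx,\bw\>}{\<\bw,\bw\>}\bw$ shows it has integer entries, determinant $-1$, and preserves $\<\,\cdot\,,\,\cdot\,\>$. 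Combining with $A_n^+=Q_1\cdots Q_{n-1}A_n$ to enforce a positive height, every $r\in R$ yields $G\cdot r\subseteq\ext^p(T_n)$.

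For the reverse containment, given $\bfs\in\ext^p(T_n)$ with $s_n>0$, I would first normalize via the signed-permutation subgroup of $G$ (using $P_{j,k}=P_{1,j}P_{1,k}P_{1,j}$ to access the full symmetric group on coordinates $1,\dots,n-1$) so that $s_1\geq s_2\geq\cdots\geq s_{n-1}\geq 0$. Then, for $n\geq 4$, reading off the last row of $A_n$ from Definition~\ref{A_n definition} shows that the new height after applying $A_n^+$ is
\begin{equation*}
    s_n'=2s_n-(s_1+s_2+s_3),
\end{equation*}
and Cauchy--Schwarz gives $s_1+s_2+s_3\leq\sqrt{3}\,s_n<2s_n$, so $s_n'\geq 0$ and $A_n^+\bfs\in T_n$. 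The crux is to show $s_n'<s_n$, equivalently $s_1+s_2+s_3>s_n$. Squaring and using $\sum_{i=1}^{n-1}s_i^2=s_n^2$:
\begin{equation*}
    (s_1+s_2+s_3)^2-s_n^2=2(s_1s_2+s_1s_3+s_2s_3)-\sum_{i=4}^{n-1}s_i^2.
\end{equation*}
The ordering gives $s_is_j\geq s_3^2$ for $1\leq i<j\leq 3$ and $s_i\leq s_3$ for $i\geq 3$, so the right-hand side is at least $(6-(n-4))s_3^2=(10-n)s_3^2\geq 0$. For $3\leq n\leq 9$ and $s_3>0$ the bound is strict; degenerate cases with $s_3=0$ collapse $\bfs$ to a lower-dimensional primitive Pythagorean tuple that reduces directly to $(1,0,\dots,0,1)^\transpose$. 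An analogous computation handles $n=3$ via $s_3'=3s_3-2(s_1+s_2)$ (the reflection in $(1,1,1)$), with $s_1+s_2>s_3$ whenever $s_2>0$.

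The main obstacle is the boundary case $n=10$, where the estimate is tight: equality forces $s_1=s_2=\cdots=s_9$, and primitivity then pins $\bfs=(1,1,\dots,1,3)^\transpose$---exactly the additional root in $R$. One must verify that this tuple sits in a $G$-orbit genuinely separate from that of $(1,0,\dots,0,1)^\transpose$, and that it is the \emph{only} such exceptional orbit; both follow from the equality analysis above, together with a direct check that none of the normalizing generators reduce $s_n$ at $(1,\dots,1,3)^\transpose$. With strict descent of the nonnegative integer $s_n$ on every other $\bfs\in\ext^p(T_n)$, the procedure terminates in $R$, yielding the reverse containment.
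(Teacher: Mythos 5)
The paper gives no proof of this lemma --- it is imported verbatim as Theorem~1 of \cite{PTmatrixgen} --- so there is no internal argument to compare against. Your sketch reconstructs the Barning--Hall (Berggren-style) descent that the cited theorem rests on, and it tracks the same machinery the paper itself develops for the sporadic points: the height-drop analysis of Lemmas~\ref{ceiling} and~\ref{heightdrop<9} is essentially the same inequality, applied just off the boundary. The central estimate $(s_1+s_2+s_3)^2 - s_n^2 \ge (10-n)s_3^2$ is correct, and correctly isolates the exceptional orbit $(1,\dots,1,3)^\transpose$ at $n=10$, so the skeleton of your argument is sound.

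Two points want tightening. First, $Q_1\cdots Q_{n-1}$ in $A_n^+$ does not ``enforce a positive height'': those factors act only on coordinates $1,\dots,n-1$ and leave coordinate $n$ unchanged. The height $2s_n - (s_1+s_2+s_3)$ after $A_n$ alone is already nonnegative for every $\bfs\in T_n$, since $|s_1+s_2+s_3| \le \sqrt{3}\,s_n < 2s_n$ by Cauchy--Schwarz; the $Q$-factors merely restore nonnegative signs in the first $n-1$ slots so that the re-sorting step makes sense. Second, the case $s_3=0$ should not be waved away as a ``collapse to a lower-dimensional tuple that reduces directly'': applying $A_n^+$ to a vector with $s_3=0$ generically re-populates several of the first three slots with nonzero entries (e.g.\ $(3,4,0,0,5)\mapsto(2,2,1,0,3)$ in $T_5$), so one never literally lands back in a lower-dimensional $T_m$. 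What actually finishes this case is that your own identity specializes to $(s_1+s_2)^2 - s_n^2 = 2s_1s_2 > 0$ whenever $s_2>0$, so the descent is still strict; and when $s_2=0$ primitivity forces $\bfs=(1,0,\dots,0,1)^\transpose$. With that spelled out, the argument has no remaining gap.
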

\noindent
We remark that any action in $G$ in the above lemma maps $\partial T_n$ to $\partial T_n$. 

We will begin this section by discussing some structures of the sporadic points of $T_n$. Then we use these structures of Pythagorean tuples and sporadic points to prove Theorem \ref{thm:SOC}.

\subsection{Sporadic Points of $\text{SOC}(n)\cap \ZZ^n$}

In this section, we will begin by restating the definition of sporadic points in the case of $\text{SOC}(n)$ and offer two partial characterizations of sporadic elements of $\text{SOC}(n)$.

\begin{definition}
    We call a point $\bfs\in T_n$ \textit{sporadic} if there is no integral point $\bfp$ such that $\<\bfp, \bfp\>=0$ and $\bfs-\bfp\in T_n$.
\end{definition}

Just as the group $G$ takes elements of $\partial T_n$ to $\partial T_n$, the group $G$ will take sporadic elements to sporadic elements. This closure ensures that our action by $G$ on the semigroup $T_n$ is well-defined. 


\begin{lemma}
    Let $\bfs\in T_n$. 
    \begin{enumerate}
        \item Then, $A_n^+ \bfs$ and $(A_n^+)^{-1}\bfs$ are both in $T_n$. \label{cone_closure}
        \item If $\bfs$ is sporadic, then $A_n^+ \bfs$ and $(A_n^+)^{-1}\bfs$ are both sporadic.\label{sporadic_closure}
    \end{enumerate}
\end{lemma}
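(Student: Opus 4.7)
The plan is to reduce both parts of the lemma to two elementary properties shared by $A_n$ and each $Q_i$ with $i\le n-1$: preservation of the bilinear form $\langle\cdot,\cdot\rangle$ and preservation of the upper cone $\mathrm{SOC}(n)$. Once these hold for the individual factors, they pass to $A_n^+=Q_1\cdots Q_{n-1}A_n$ and, via $Q_i^{-1}=Q_i$, $A_n^{-1}=A_n$, and mutual commutativity of the $Q_i$'s, also to $(A_n^+)^{-1}=A_n Q_1\cdots Q_{n-1}$. All the factors are integer with determinant $\pm1$, so both $A_n^+$ and $(A_n^+)^{-1}$ are unimodular bijections of $\ZZ^n$.

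I would verify form preservation first: each $Q_i$ for $i<n$ is a reflection in $\be_i$ with $\langle\be_i,\be_i\rangle=1$, and $A_n$ is by definition a reflection in a spacelike vector ($(1,1,1)$ of squared norm $1$ when $n=3$, and $(1,1,1,0,\dots,0,1)$ of squared norm $2$ when $n\ge 4$). For cone preservation, each $Q_i$ with $i<n$ trivially fixes the defining inequality of $\mathrm{SOC}(n)$, which depends only on $x_1^2+\cdots+x_{n-1}^2$ and on $x_n$, neither of which is affected by flipping a sign among the first $n-1$ coordinates. For $A_n$, form preservation already places $A_n\bfx$ in the double cone, so the only real content is that the last coordinate stays nonnegative; a direct computation gives this coordinate as $2x_n-x_1-x_2-x_3$ for $n\ge 4$ (and $3x_n-2x_1-2x_2$ for $n=3$), which is nonnegative by a Cauchy--Schwarz estimate combined with $\sqrt{x_1^2+x_2^2+x_3^2}\le x_n$, with strict slack. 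This yields part (1) for both $A_n^+$ and $(A_n^+)^{-1}$.

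For part (2) I would argue by contrapositive using the cone-preserving inverse. Suppose $A_n^+\bfs$ is not sporadic; then there is a nonzero integer Pythagorean tuple $\bfp'\in\partial T_n\cap\ZZ^n$ with $A_n^+\bfs-\bfp'\in T_n$. Set $\bfp:=(A_n^+)^{-1}\bfp'$. Unimodularity gives $\bfp\in\ZZ^n$, form preservation gives $\langle\bfp,\bfp\rangle=0$, part (1) applied to $(A_n^+)^{-1}$ gives $\bfp\in T_n$, and bijectivity gives $\bfp\ne 0$, so $\bfp\in\partial T_n\cap\ZZ^n$ is itself a nonzero Pythagorean tuple. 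Linearity rewrites $A_n^+\bfs-\bfp'=A_n^+(\bfs-\bfp)$; applying $(A_n^+)^{-1}$ to this element of $T_n$ and invoking part (1) once more produces $\bfs-\bfp\in T_n$, contradicting the sporadicity of $\bfs$. The argument for $(A_n^+)^{-1}\bfs$ is symmetric after swapping the roles of $A_n^+$ and its inverse. The only real obstacle is the Cauchy--Schwarz computation in the second paragraph; everything else is bookkeeping that exploits the fact that $A_n^+$ is a unimodular, form-preserving, cone-preserving automorphism of $T_n$.
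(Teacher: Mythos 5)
Your proof is correct and follows essentially the same path as the paper: for part (2) you use the identical contradiction via $(A_n^+)^{-1}(A_n^+\bfs-\bfp)=\bfs-(A_n^+)^{-1}\bfp$, and for part (1) you simply make explicit the direct inequality-check the paper alludes to, organizing it cleanly as form-preservation plus a Cauchy--Schwarz estimate on the last coordinate of each factor.
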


\begin{proof}
    The first claim follows by simply checking the required inequalities directly.  For the second claim, we proceed by contradiction. Suppose $\bfs$ is sporadic but $(A_n^+) \bfs$ is not. Then, there is some point $\bfp\in T_n$ such that $\<\bfp,\bfp\>=0$ and $(A_n^+)\bfs - \bfp\in T_n$. However, we would then have that \[(A_n^+)^{-1}(A_n^+ \bfs - \bfp) = \bfs - (A_n^+)^{-1} \bfp \in T_n.\] As $\bfp\in\partial T_n$, $(A_n^+)^{-1} \bfp\in \partial T_n$, which is a contradiction with $\bfs$ being sporadic. The case of $(A_n^+)^{-1}\bfs$ being sporadic follows the same argument.
\end{proof}

Next, we will provide some lemmas about sporadic points that are necessary to prove Theorem~\ref{thm:SOC}. 
Lemmas~\ref{ceiling} and~\ref{sporadic=-1} show that sporadic points are close to the boundary $\partial T_n$.

\begin{lemma}\label{ceiling}
    Suppose $\bfs \in T_n$ is a primitive sporadic with nonnegative entries such that $s_n>1$ and $s_i\neq 0$ for some $i\in\{1,\dots,n-1\}$. Then, {\small\[s_n = \left\lceil\sqrt{s_1^2+s_2^2+\dots+s_{n-1}^2}\right\rceil\]}
\end{lemma}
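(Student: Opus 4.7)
The plan is to prove the equality by combining one easy inequality (from membership in $T_n$) with a contrapositive argument that exploits sporadicness. Since $\bfs\in T_n$ gives $s_n\ge\sqrt{\sum_{j<n}s_j^2}$ and $s_n$ is an integer, we automatically get $s_n\ge\lceil\sqrt{\sum_{j<n}s_j^2}\rceil$. The content of the lemma is the reverse inequality.

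For this, I would argue by contradiction: suppose $s_n\ge\lceil\sqrt{\sum_{j<n}s_j^2}\rceil+1$, so in particular $s_n-1\ge\sqrt{\sum_{j<n}s_j^2}$, which squares to $(s_n-1)^2\ge\sum_{j<n}s_j^2$. The idea is then to exhibit a Pythagorean tuple $\bfp\in\partial T_n$ such that $\bfs-\bfp\in T_n$, directly contradicting sporadicness of $\bfs$. The natural candidate uses the hypothesis that some $s_i\neq 0$ for $i<n$: pick such an $i$ (so $s_i\ge 1$ since $\bfs$ has nonnegative integer entries) and set
\[
\bfp := \be_i+\be_n.
\]
Then $\langle\bfp,\bfp\rangle=1-1=0$, so $\bfp\in\partial T_n$.

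It remains to check that $\bfs-\bfp\in T_n$. Nonnegativity of the entries is immediate: the only altered coordinates are $s_i-1\ge 0$ (using $s_i\ge 1$) and $s_n-1\ge 1$ (using $s_n>1$). For the second-order condition we must verify
\[
(s_n-1)^2 \;\ge\; \sum_{\substack{j<n\\j\neq i}}s_j^2 \;+\;(s_i-1)^2 \;=\; \sum_{j<n}s_j^2 - 2s_i + 1,
\]
and since $s_i\ge 1$ the right-hand side is at most $\sum_{j<n}s_j^2$, which is bounded by $(s_n-1)^2$ by our standing assumption. So $\bfs-\bfp\in T_n$, contradicting sporadicness of $\bfs$.

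There is no real obstacle here beyond choosing the right $\bfp$; the hypothesis $s_n>1$ is exactly what makes $s_n-1\ge 0$ so that $\bfs-\bfp$ stays in the nonnegative orthant, and the hypothesis that some $s_i\neq 0$ is exactly what lets us spend a unit on a nonzero coordinate rather than on a zero one. The primitivity assumption plays no role in the argument I have in mind; it is presumably included because the statement is most naturally applied to primitive sporadic points.
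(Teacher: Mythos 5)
Your proof is correct and takes essentially the same route as the paper: argue by contradiction, assume $s_n$ is at least one more than the ceiling, and subtract the primitive Pythagorean tuple $\be_i+\be_n$ (the paper uses $(1,0,\dots,0,1)=\be_1+\be_n$ after a WLOG reindexing so that $s_1\neq 0$) to violate sporadicness. The only cosmetic difference is that you avoid the WLOG by working with the given index $i$ directly, and you spell out the nonnegativity check that the paper leaves implicit.
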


\begin{proof}
    We will show this by proving that \[\sqrt{s_1^2+s_2^2+\dots+s_{n-1}^2}< s_n < \sqrt{s_1^2+s_2^2+\dots+s_{n-1}^2} +1\] where the first inequality is given by membership in $T_n$. Without loss of generality, we can assume that $s_1\neq 0$. By way of contradiction, suppose that $\bfs$ is a primitive sporadic such that $s_n>1$ and $s_1> 0$, and that $s_n \ge \sqrt{s_1^2+s_2^2+\dots+s_{n-1}^2} +1$. Then, we would have that  \[s_n - 1 \ge \sqrt{s_1^2+s_2^2+\dots+s_{n-1}^2} > \sqrt{(s_1-1)^2+s_2^2+\dots+s_{n-1}^2}\] which is equivalent to $\bfs- (1,0,\dots,0,1)\in T_n$. This contradicts the assumption that $s$ is sporadic. Thus, we have have the desired equality.
\end{proof}

\begin{lemma} \label{sporadic=-1}
    Let $\bfs\in T_n$. If $\<\bfs,\bfs\>=-1$, then $\bfs$ is sporadic.
\end{lemma}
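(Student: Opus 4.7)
The plan is to argue by contradiction using the bilinear form $\<\cdot,\cdot\>$. Suppose $\bfs\in T_n$ satisfies $\<\bfs,\bfs\>=-1$ but is not sporadic. Then there exists a nonzero integer vector $\bfp$ with $\<\bfp,\bfp\>=0$ (a Pythagorean tuple lying on $\partial T_n$) and $\bfp\in T_n$, such that $\bfs-\bfp\in T_n$. Note that $\bfp\ne\mathbf{0}$ following the extreme-point convention of Definition~\ref{def:sporadic}, and $p_n>0$ since $p_n=\sqrt{p_1^2+\cdots+p_{n-1}^2}$ together with $\bfp\ne\mathbf{0}$ forces $p_n>0$; similarly $s_n\ge 1$ because $s_n^2=1+\sum_{i<n}s_i^2\ge 1$.

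The key computation I will use expands the bilinear form on the difference:
\[
\<\bfs-\bfp,\bfs-\bfp\>=\<\bfs,\bfs\>-2\<\bfs,\bfp\>+\<\bfp,\bfp\>=-1-2\<\bfs,\bfp\>.
\]
Since every $\bfx\in T_n$ satisfies $\<\bfx,\bfx\>\le 0$ (membership in the SOC is exactly $x_n^2\ge x_1^2+\cdots+x_{n-1}^2$), the hypothesis $\bfs-\bfp\in T_n$ forces $-1-2\<\bfs,\bfp\>\le 0$, i.e.\ $\<\bfs,\bfp\>\ge -\tfrac{1}{2}$.

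The second ingredient is a strict Cauchy--Schwarz bound in the opposite direction. Because $\bfs$ is strictly time-like ($\<\bfs,\bfs\>=-1<0$) while $\bfp$ is a nonzero null vector, the classical Cauchy--Schwarz inequality in $\bbR^{n-1}$ gives
\[
s_1p_1+\cdots+s_{n-1}p_{n-1}\le \sqrt{s_1^2+\cdots+s_{n-1}^2}\cdot\sqrt{p_1^2+\cdots+p_{n-1}^2}=\sqrt{s_n^2-1}\cdot p_n < s_n p_n,
\]
where the strict inequality uses $p_n>0$ and $\sqrt{s_n^2-1}<s_n$. Hence $\<\bfs,\bfp\><0$.

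Finally, since $\bfs$ and $\bfp$ have integer entries, $\<\bfs,\bfp\>\in\bbZ$. Combining the two bounds yields an integer in $[-\tfrac{1}{2},0)$, which is impossible. This contradiction proves $\bfs$ is sporadic. I expect the only subtle step is verifying the strict inequality in Cauchy--Schwarz, which is what distinguishes a strictly time-like $\bfs$ from a merely lightlike element and gives the proof its punch; the rest is a direct expansion and an integrality argument.
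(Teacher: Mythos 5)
Your proof is correct and follows essentially the same route as the paper: expand $\<\bfs-\bfp,\bfs-\bfp\>$ to deduce $\<\bfs,\bfp\>\ge-\tfrac12$, then use strict Cauchy--Schwarz (strict because $\bfs$ is strictly time-like and $p_n>0$) to get $\<\bfs,\bfp\><0$, contradicting integrality. The only minor difference is that you explicitly flag $\bfp\ne\mathbf{0}$ (hence $p_n>0$), a detail the paper leaves implicit; this is a reasonable clarification rather than a new idea.
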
    


\begin{proof}
By way of contradiction, suppose $\<\bfs,\bfs\>=-1$ and that $\bfs$ is not sporadic. Then, there exists some $\textbf{p}\in T_n$ such that $\<\bfp,\bfp\>=0$ and $\textbf{s}-\textbf{p}\in T_n$. This is equivalent to saying that 
\[\<\bfs-\bfp, \bfs-\bfp\>\leq 0.\] This gives us that 
\begin{align*}
    \<\bfs,\bfs\> -2\<\bfs,\bfp\> +\<\bfp,\bfp\> =-1-2\<\bfs,\bfp\> &\leq 0.
\end{align*}
Thus, $\<\bfs,\bfp\>\geq -\frac12$ so $\<\bfs,\bfp\>\geq0$  by its integrality. However, as $\<\bfs,\bfs\>=-1$ implies that $\sqrt{s_1^2+\dots+s_{n-1}^2}<s_n$ and $\<\bfp,\bfp\>=0$ implies that $\sqrt{p_1^2+\dots+p_{n-1}^2}=p_n$, using the Cauchy-Schwarz inequality, we have that \[s_1p_1+\dots+s_{n-1}p_{n-1}\le\sqrt{(s_1^2+\dots+s_{n-1}^2)(p_1^2+\dots+p_{n-1}^2)} <s_np_n.\]  Thus, $\<\bfs,\bfp\><0$, reaching a contradiction. Therefore, $\<\bfs,\bfs\>=-1$ implies that $\bfs$ is sporadic.
\end{proof}    


Inspired by the structure of Pythagorean tuples, we analyze the set of sporadic points that remain at the same height in $T_n$ after multiplication by $(A_n^+)^{-1}$. Let $(p)_n$ denotes the $n^{\text{th}}$ coordinate of $p$, 

\begin{lemma} \label{lemma: stable height}
     Let $n\leq 10$. Suppose $\bfs\in T_n$ is a primitive sporadic such that $s_1\geq\dots\geq s_{n-1}\geq 0$ and $s_{n}>1$. The following list of tuples are the only such $\bfs$ where $((A_n^+ )^{-1}\bfs)_n= s_n$.

     \begin{itemize}
         \item For $n=7$, we have the following tuple: $(1,1,1,1,1,1,3)$.
         \item For $n=8$, we have the following tuples: $(1,1,1,1,1,1,1,3)$, $(1,1,1,1,1,1,0,3)$.
         \item For $n=9$, we have the following tuples:
         {\small
            \begin{align*}
                (1,1,1,1,1,1,1,1,3), (1,1,1,1,1,1,1,0,3),
                (1,1,1,1,1,1,0,0,3), (2,2,2,2,2,2,2,1,6).
            \end{align*} 
        }
         \item For $n=10$, we have the following tuples:
            {\small\begin{align*}
                (1,1,1,1,1,1,1,1,0,3), &\hspace{3mm} (1,1,1,1,1,1,1,0,0,3), \hspace{3mm} 
                (1,1,1,1,1,1,0,0,0,3), \\
                (2,2,2,2,2,2,2,2,1,6), &\hspace{3mm}(2,2,2,2,2,2,2,1,0,6).
            \end{align*} }
     \end{itemize}
\end{lemma}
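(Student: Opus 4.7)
The plan is to translate the algebraic condition into a Diophantine identity, then use it together with the semigroup, ordering, primitivity, and ceiling constraints to trap the coordinates of $\bfs$, and finally enumerate the survivors and check sporadicity by hand. First, since $A_n^2=I$ and the $Q_k$'s commute among themselves, $(A_n^+)^{-1}=A_n\cdot Q_1Q_2\cdots Q_{n-1}$. Plugging into the reflection formula $A_n\bv=\bv-\<\bv,\bw\>\bw$ with $\bw=(1,1,1,0,\ldots,0,1)$ and reading off the $n$-th coordinate, the identity $((A_n^+)^{-1}\bfs)_n=s_n$ simplifies to
\[
s_n\;=\;s_1+s_2+s_3.
\]
One sees that under this identity $A_n^+$ essentially negates the first $n-1$ coordinates and fixes the last, giving the geometric picture of the tuples in question.

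Second, I would extract two complementary inequalities. Substituting $s_n=s_1+s_2+s_3$ into $\sum_{i<n}s_i^2\le s_n^2$ gives the upper bound
\[
s_4^2+\cdots+s_{n-1}^2 \;\le\; 2(s_1s_2+s_1s_3+s_2s_3),\quad(\dagger)
\]
while Lemma~\ref{ceiling} (applicable since primitivity with $s_n>1$ rules out $\bfs=(0,\ldots,0,s_n)$) provides the matching lower bound
\[
s_4^2+\cdots+s_{n-1}^2 \;>\; 2(s_1s_2+s_1s_3+s_2s_3)-2s_n+1.\quad(\dagger\dagger)
\]
In particular, $\<\bfs,\bfs\>=\sum_{i\ge 4}s_i^2-2(s_1s_2+s_1s_3+s_2s_3)$ is a negative integer in $\{-(2s_n-2),\ldots,-1\}$, since sporadic points are never themselves Pythagorean. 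Using $s_1,s_2\ge s_3$ and $s_i\le s_3$ for $i\ge 4$, we have $s_1s_2+s_1s_3+s_2s_3\ge 3s_3^2$ and $\sum_{i\ge 4}s_i^2\le(n-4)s_3^2$; combining with $(\dagger\dagger)$ yields
\[
(10-n)\,s_3^2\;<\;2s_n-1.
\]
For $n\in\{7,8,9\}$ this bounds $s_3$, and together with $s_n=s_1+s_2+s_3\le 3s_1$ and $\gcd(s_1,\ldots,s_n)=1$ only a handful of triples $(s_1,s_2,s_3)$ survive; for each, $(\dagger)$ and $(\dagger\dagger)$ narrow the tail $(s_4,\ldots,s_{n-1})$ to a short explicit list.

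The main obstacle is the case $n=10$, where the bound above is vacuous. Here I would write $s_i=s_3-\epsilon_i$ for $4\le i\le 9$ with $\epsilon_i\ge 0$ and use the tight gap between $(\dagger)$ and $(\dagger\dagger)$ to force $\sum_{i\ge 4}\epsilon_i$ small and $s_1=s_2=s_3$, after which primitivity essentially pins $s_3$ to $\{1,2\}$ (the symmetric candidates $(k,k,\ldots,k,3k)$ with $k\ge 3$ fail primitivity when all $\epsilon_i=0$, and near-symmetric ones are cut down by $(\dagger\dagger)$). Finally, for each surviving candidate I would verify sporadicity: when $\<\bfs,\bfs\>=-1$, Lemma~\ref{sporadic=-1} gives it for free; otherwise I enumerate the (finite) list of primitive Pythagorean tuples $\bfp\in T_n$ with $p_n\le s_n$ (available via the Barning-Hall construction of Lemma~\ref{cass (R,G)}) and check coordinate-wise that $\bfs-\bfp\notin T_n$ up to permutation of the first $n-1$ slots. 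The tuples passing this check are exactly those in the statement; the sporadicity verification is the most delicate step since it requires identifying which primitive Pythagorean tuples fit componentwise inside $\bfs$ after resorting.
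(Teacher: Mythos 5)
There is a genuine gap, and it stems from using a lower bound that is too weak. You derive your inequality $(\dagger\dagger)$ from the statement of Lemma~\ref{ceiling}, which gives $(s_n-1)^2 < s_1^2 + \cdots + s_{n-1}^2$; but the sporadicity condition $\bfs - (1,0,\dots,0,1)\notin T_n$ itself yields the strictly stronger $(s_n-1)^2 < (s_1-1)^2 + s_2^2 + \cdots + s_{n-1}^2$, which after substituting $s_n = s_1+s_2+s_3$ gives the paper's inequality $s_4^2 + \cdots + s_{n-1}^2 > 2(s_1s_2 + s_1s_3 + s_2s_3) - 2s_2 - 2s_3$. The difference between the two right-hand sides is $2s_1 - 1 > 0$, and this difference is not cosmetic: with your weaker $(\dagger\dagger)$, the entire infinite family $(s_1, 1, 0, \dots, 0, s_1+1)$ passes the filter for every $s_1 \ge 1$ (check: $(\dagger\dagger)$ reduces to $0 > -1$), yet none of these are sporadic (subtract $(1,0,\dots,0,1)$ to see $(s_1-1,1,0,\dots,0,s_1)\in T_n$). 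The paper's inequality, by contrast, gives $2s_1 - 2 < 0$ for this family and kills it outright. This means your ``enumerate the survivors and check sporadicity by hand'' plan cannot be executed: the survivor set is infinite.

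Relatedly, the claimed consequence $(10-n)s_3^2 < 2s_n - 1$ does not bound $s_3$, because $s_n = s_1 + s_2 + s_3$ is unbounded a priori; taking $s_3$ fixed and $s_1$ large always satisfies it. The assertion that ``only a handful of triples $(s_1,s_2,s_3)$ survive'' for $n\in\{7,8,9\}$ is therefore unjustified, and for $n=10$ your sketch explicitly relies on forcing $s_1=s_2=s_3$ from the gap between $(\dagger)$ and $(\dagger\dagger)$, which your weaker $(\dagger\dagger)$ cannot do (e.g.\ $s_1 = s_2+1$ with $s_2 = s_3$ produces no contradiction from $(\dagger\dagger)$ and $\sum_{i\ge4}s_i^2 \le 6s_3^2$). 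The paper's proof fixes exactly this: using the sharper inequality it runs a short case analysis (assume $s_j \ge s_{j+1}+1$, derive $0>0$) to prove the structural fact $s_1 = s_2 = \cdots = s_6 = k$, after which $s_n = 3k$, the tail entries are at most $k-1$, and primitivity together with a few more sporadicity subtractions (by $(1,0,\dots,0,1)$ and $(1,1,\dots,1,3)$) pin down $k$ and the tail. If you replace $(\dagger\dagger)$ with the inequality obtained directly from $\bfs - (1,0,\dots,0,1)\notin T_n$ and then establish the equality of the first six coordinates, your overall plan (reduce to $s_n = s_1+s_2+s_3$, bound, enumerate, verify sporadicity) becomes essentially the paper's argument.
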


\begin{proof}
    This is equivalent to showing that these are the only such sporadic points such that $s_1+s_2+s_3=s_{n}$. As $\bfs$ is sporadic, $\bfs-(1,0,\dots,0,1)\notin T_n$. This is equivalent to saying that $(s_{n}-1)^2< (s_1-1)^2+s_s^2 +\dots+ s_{n-1}^2$ or 
    \begin{align} \label{fixed height ineq}
        2s_1s_2+2s_2s_3+2s_1s_3-2s_2-2s_3-s_4^2 -\dots - s_{n-1}^2< 0.
    \end{align}
    We begin by showing that the first six coordinates must be equal. We proceed by contradiction in each of the below arguments.
    \begin{itemize}
        \item Suppose that $s_1\geq s_2+1$. Then, \eqref{fixed height ineq} implies 
        \begin{align*}
            0 &> 2s_2(s_2+1)+2s_2s_3+2(s_2+1)- 2s_2 -2s_3 -s_4^2-\dots-s_{n-1}^2 \\
            & = 2s_2^2 + 4s_2s_3 - s_4^2-\dots-s_{n-1}^2 \geq 0.
        \end{align*} 
        As this is a contradiction, we must have that $s_1=s_2$.

        \item Suppose that $s_2\geq s_3+1$. Then, \eqref{fixed height ineq} implies  
        \begin{align*}
            0 &>  2s_1(s_3+1) +2s_3(s_3+1) +2s_1s_3 -2(s_3+1) -2s_3 -s_4^2-\dots-s_{n-1}^2 \\
            & = 4s_1s_3 + 2s_3^2 -s_4^2-\dots-s_{n-1}^2 +2s_1-2s_3-2\\
            &\geq 2(s_2-s_3-1) \geq 0.
        \end{align*}
        As this is a contradiction, we must have that $s_1=s_2=s_3$.

        \item Suppose that $s_3\geq s_4+1$. Then, \eqref{fixed height ineq} implies 
        \begin{align*}
            0 &> 6(s_4+1)^2 - 4(s_4+1) -s_4^2-\dots-s_{n-1}^2\\
            & = 5s_4^2-s_5^2-\dots-s_{n-1}^2 + 2s_4+2 \geq 0
        \end{align*}
        As this is a contradiction, we must have that $s_1=s_2=s_3=s_4$.

        \item Suppose that $s_4\geq s_5+1$. Then, \eqref{fixed height ineq} implies
        \begin{align*}
            0 &> 5(s_5+1)^2 - 4(s_5+1) - s_5^2-\dots-s_{n-1}^2\\
             & = 4s_5^2 - s_6^2-\dots-s_{n-1}^2 +6s_5 + 1\\
             &\geq 6s_5+1 \geq 0.
        \end{align*}
        As this is a contradiction, we must have that $s_1=s_2=s_3=s_4=s_5$.

        \item Suppose that $s_5\geq s_6+1$. Then, \eqref{fixed height ineq} implies 
        \begin{align*}
            0 &> 4(s_6+1)^2-4(s_6+1) - s_6^2-\dots-s_9^2 \\
            & = 3s_6^2 - s_7^2-\dots-s_{n-1}^2  +4s_6-4 \geq 0.
        \end{align*}
        As this is a contradiction, we must have that $s_1=s_2=s_3=s_4=s_5=s_6$.
    \end{itemize}

This implies that we have no such sporadic points for $n\leq 6$. Suppose $n=7$. Then, any candidate tuple must be of one of the following form:
\begin{align*}
    (k,k,k,k,k,k,3k)  
\end{align*}
where $k\in\ZZ_{> 0}$. As $\bfs$ is assumed to be primitive, $k=1$ and the only possible tuple is $(1,1,1,1,1,1,3)$.

Suppose $n=8$. Then, any candidate tuple must be of one of the following forms:
\begin{align*}
    (k,k,k,k,k,k,s_7,3k) \\
    (k,k,k,k,k,k,k,3k) 
\end{align*}
where $k\in\ZZ_{> 0}$ and $s_7 \leq k-1$. As $\bfs$ is assumed to be primitive, the second possible form only contributes the tuple $(1,1,1,1,1,1,1,3)$. Suppose $\bfs$ is of the first form listed. We claim that $k=1$. By way of contradiction, suppose that $k\geq 2$. Then, $\bfs-(1,0,\dots,0,1)\in T_n$ as 
\begin{align*}
    (3k-1)^2-(k-1)^2-5k^2-(k+1) = 3k^2-5k+1 \geq 3 > 0
\end{align*}
Thus, the only tuple satisfying these restrictions is $(1,1,1,1,1,1,0,3)$.
 
Suppose $n=9$. Then, for $k\in\ZZ_{> 0}$, any candidate tuple must be of one of the following forms:
\begin{align}
    &(k,k,k,k,k,k,s_7,s_8,3k) \label{form 8.1}\\
    &(k,k,k,k,k,k,k,s_8,3k) \label{form 8.2}\\
    &(k,k,k,k,k,k,k,k,3k) \label{form 8.3}
\end{align}
where $s_7,s_8 \leq k-1$. 

Suppose $\bfs$ is of the form (\ref{form 8.1}). By way of contradiction, suppose that $k\geq 2$. We claim that $\bfs-(1,0,\dots,0,1)\in T_n$. This follows from that fact that 
\begin{align*}
    (3k-1)^2-(k-1)^2-5k^2-s_7^2-s_8^2 &\geq (3k-1)^2-(k-1)^2-5k^2-2(k-1)^2\\
    &= k^2-2 \\
    &\geq 2 >0
\end{align*}
Thus, $\bfs$ must not be sporadic and $k=1$. This gives us the tuple $(1,1,1,1,1,1,0,0,3)$. 

Suppose $\bfs$ is of the form (\ref{form 8.2}). By way of contradiction, suppose $k\geq 3$. Then, we claim that $\bfs-(1,0,\dots,0,1)\in T_n$. This follows from the fact that 
\begin{align*}
    (3k-1)^2-(k-1)^2 -6k^2 -s_8^2 &\geq (3k-1)^2-(k-1)^2 -6k^2 -(k-1)^2\\
    &=2k^2-2k-1 \\
    &\geq 2 > 0.
\end{align*}
Thus, we only need to consider $k=1,2$. If $k=1$, this gives us the tuple $(1,1,1,1,1,1,1,0,3)$. Suppose $k=2$. This gives us the following possible tuples: \[(2,2,2,2,2,2,2,0,6), \hspace{5mm}(2,2,2,2,2,2,2,1,6).\] This first tuple listed is not primitive so this only give us the tuple $(2,2,2,2,2,2,2,1,6)$. Lastly, if $\bfs$ is of the form (\ref{form 8.3}), then $\bfs$ is only primitive if $k=1$. This gives us our last tuple, $(1,1,1,1,1,1,1,1,3)$. 

Suppose $n=10$. Then,for $k\in\ZZ_{> 0}$, any candidate tuple must be of one of the following forms:
\begin{align}
    &(k,k,k,k,k,k,s_7,s_8, s_9, 3k) \label{10.1} \\
    &(k,k,k,k,k,k,k, s_8, s_9, 3k) \label{10.2}\\
    &(k,k,k,k,k,k,k,s_9,3k) \label{10.3}\\
    &(k,k,k,k,k,k,k,k,3k) \label{10.4}
\end{align}
where $s_7,s_8, s_9\leq k-1$. Any tuple of form (\ref{10.4}) is a Pythagorean tuple so we may exclude it. Suppose $\bfs$ is of the form (\ref{10.1}). By way of contradiction, suppose $k\geq 2$. Then, we claim that $\bfs-(1,0,\dots,0,1)\in T_n$. This follows from the fact that 
\begin{align*}
    (3k-1)^2-&(k-1)^2 -5k^2 -s_7^2 -s_8^2-s_9^2 \\
    &\geq (3k-1)^2-(k-1)^2 -5k^2 -3(k-1)^3\\
    &=2k-3 \geq 1 > 0.
\end{align*}
Thus, $k=1$ and the only tuple we have of this form is $(1,1,1,1,1,1,0,0,0,3)$. 

Suppose $\bfs$ is of the form (\ref{10.2}). By way of contradiction, suppose $k\geq 2$. If $s_9 \geq 1$, then $\bfs-(1,1,\dots,1,3)\in T_n$ as 
\begin{align*}
    (3k-3)^2 -7(k-1)^2-(s_8-1)^2-(s_9-1)^2 &\geq (3k-3)^2 -7(k-1)^2-2(k-2)^2\\
    &= 4k-6 \geq 2 > 0
\end{align*}
Thus $s_9=0$. Suppose $k\geq 3$. Then, $\bfs-(1,0,\dots,0,1)\in T_n$ as 
\begin{align*}
    (3k-3)^2 -(k-1)^2-6k^2-s_8^2 &\geq (3k-3)^2 -(k-1)^2-6k^2-(k-1)^2\\
    &= k^2-2k-1 \geq 2 > 0
\end{align*}
Thus, our options are $k=1,2$. If $k=1$, this recovers the tuple $(1,1,1,1,1,1,1,0,0,3)$. If $k=2$, this gives us potential tuples $(2,2,2,2,2,2,2,0,0,6)$ and $(2,2,2,2,2,2,2,1,0,6)$. The first is not primitive so we exclude it. 

Lastly, suppose $\bfs$ is of the form (\ref{10.3}). If $s_9\neq 0$, the $\bfs-(1,1,\dots,1,3)\in T_n$. This follows from the fact that 
\begin{align*}
    (3k-3)^2-8(k-1)^2-(s_9-1)^2 &\geq (3k-3)^2-8(k-1)^2-(k-2)^2\\
    &=14k-11 \geq 0
\end{align*}
Thus, $s_9=0$. The only primitive sporadic satisfying these constraints is $(1,1,1,1,1,1,1,1,0,3)$. This completes the proof.
\end{proof}


Then we show that besides the points listed in Lemma~\ref{lemma: stable height}, every other sporadic points will reduce to a strictly lower height after multiplication by $(A_n^+)^{-1}$.

\begin{lemma} \label{heightdrop<9}
Let $\bfs\in T_n$ be sporadic with nonnegative entries such that $s_1\geq s_2\geq\dots\geq s_{n-1}$, $s_1\ge 1$ and $3\leq n \leq 10$. For $s$ not listed in Lemma~\ref{lemma: stable height}, \[((A_n^+)^{-1} \bfs)_n < (\bfs)_n.\] 
\end{lemma}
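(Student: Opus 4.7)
The plan is to translate the conclusion $((A_n^+)^{-1}\bfs)_n < s_n$ into the concrete inequality $s_n < s_1+s_2+s_3$ (for $n \ge 4$; the analogous target for $n=3$ is $s_3 < s_1+s_2$, since $A_3$ is the reflection in a different vector with a different quadratic norm). This identification is already used implicitly in the proof of Lemma~\ref{lemma: stable height}, where $((A_n^+)^{-1}\bfs)_n = s_n$ is rewritten as $s_n = s_1+s_2+s_3$. By that lemma, any sporadic $\bfs$ with $s_n = s_1+s_2+s_3$ appears in the explicit list (which is empty for $n\leq 6$), so for a non-listed sporadic $\bfs$ the task reduces to ruling out $s_n \geq s_1+s_2+s_3+1$.

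Suppose for contradiction such an $\bfs$ exists. I would show that $\bfs - \bfp \in T_n$ for the Pythagorean tuple $\bfp = (1, 0, \ldots, 0, 1) \in \partial T_n$, contradicting sporadicity. Using $s_n - 1 \geq s_1+s_2+s_3$,
\begin{align*}
(s_n-1)^2 - (s_1-1)^2 - \sum_{i=2}^{n-1} s_i^2
 &\geq (s_1+s_2+s_3)^2 - (s_1-1)^2 - \sum_{i=2}^{n-1} s_i^2 \\
 &= 2(s_1s_2 + s_1s_3 + s_2s_3) + 2s_1 - 1 - \sum_{i=4}^{n-1} s_i^2.
\end{align*}
The ordering $s_1 \geq s_2 \geq \cdots \geq s_{n-1}$ gives $s_1s_2, s_1s_3, s_2s_3 \geq s_3^2$, so $2(s_1s_2 + s_1s_3 + s_2s_3) \geq 6s_3^2$; and $s_i^2 \leq s_3^2$ for $i \geq 4$ yields $\sum_{i=4}^{n-1} s_i^2 \leq (n-4)s_3^2 \leq 6s_3^2$ whenever $n \leq 10$. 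Hence the expression is bounded below by $(10-n)s_3^2 + 2s_1 - 1 \geq 2s_1 - 1 \geq 1 > 0$, giving $\bfs - \bfp \in T_n$ as required. The case $n = 3$ is handled by the same template with $\bfp = (1, 0, 1)$ under the assumption $s_3 \geq s_1+s_2$, with no higher-index terms remaining in the sum.

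The key obstacle, and the reason the statement caps $n$ at $10$, is the crude bound $\sum_{i\ge 4} s_i^2 \leq (n-4)s_3^2 \leq 6s_3^2$: this is exactly tight at $n = 10$, where the lower bound $(10-n)s_3^2 + 2s_1 - 1$ degenerates to $2s_1 - 1$. For $n \geq 11$ this one-tuple subtraction would fail, and one would need a richer Pythagorean tuple (for instance the additional root $(1, 1, \ldots, 1, 3)$ that enters Lemma~\ref{cass (R,G)} at $n = 10$, or its higher-dimensional analogues) to absorb the extra $s_i^2$ contributions. Packaging this contradiction argument together with Lemma~\ref{lemma: stable height} for the equality case $s_n = s_1+s_2+s_3$ then yields the full conclusion $((A_n^+)^{-1}\bfs)_n < s_n$ for every non-listed sporadic $\bfs$.
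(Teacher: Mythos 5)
Your proposal is correct and takes essentially the same route as the paper: both translate $((A_n^+)^{-1}\bfs)_n < s_n$ into $s_n < s_1+s_2+s_3$, establish the non-strict version via the bound $\sum_{i\ge4}s_i^2 \le 2(s_1s_2+s_1s_3+s_2s_3)$ (valid precisely for $n\le 10$ under the sorting hypothesis), and then appeal to Lemma~\ref{lemma: stable height} to dispose of the equality case. The only cosmetic difference is that you inline the contradiction argument (subtracting $(1,0,\dots,0,1)$), whereas the paper factors it out as Lemma~\ref{ceiling} and then does the arithmetic under the ceiling; these are the same computation. One small slip: for $n=3$ the contradiction hypothesis should be $s_3\ge s_1+s_2+1$ rather than $s_3\ge s_1+s_2$, since the equality case $s_3=s_1+s_2$ is what Lemma~\ref{lemma: stable height} (vacuously, for $n\le6$) is meant to absorb.
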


\begin{proof}
    This is equivalent to showing that $-s_1-s_2-s_3+2s_n< s_n$, or rather $s_n<s_1+s_2+s_3.$ The case of $n=3$ reduces to the inequality $s_3<s_1+s_2$. By Lemma~\ref{ceiling}, we have that 
    \begin{align}
        s_n &= \bigg\lceil\sqrt{s_1^2+s_2^2+\dots+s_{n-1}^2}\bigg\rceil\notag\\
        &\le \bigg\lceil\sqrt{s_1^2+s_2^2+s_3^2 + 2s_1s_2+2s_1s_3+2s_2s_3}\bigg\rceil \label{dim9problem}\\ 
        &=  \bigg\lceil\sqrt{(s_1+s_2+s_3)^2}\bigg\rceil=s_1+s_2+s_3,\notag
    \end{align}
where the inequality follows from the order $s_1s_2\ge s_1s_3\ge s_2s_3\ge s_4^2\ge ...\ge s_{n-1}^2$ and $n\le 10$. As $s$ is not one of the tuples listed in Lemma~\ref{lemma: stable height}, the inequality \eqref{dim9problem} can be made strict. Therefore, $s_n<s_1+s_2+s_3$, which implies that $(A_n^+)^{-1}\bfs$ sits at a strictly lower height in the cone than $\bfs$.  
\end{proof}

\subsection{Proof of Theorem~\ref{thm:SOC}}
We now present a complete formulation of Theorem~\ref{thm:SOC} followed by its proof. 

\begin{theorem}\label{thm:SOC_full_version}
    For dimension $3\le n\le 10$, the conical semigroup $\mathrm{SOC}(n)\cap\ZZ^n$ is $(R,G)$-finitely generated by 
    \[G=\left\< A_n^+, Q_1, \dots, Q_{n-1}, P_{1,2}, P_{1,3},\dots P_{1,n-1} \right\>\] and a finite set $R$.
    More specifically,
    \begin{enumerate}
        \item If $3\leq n\leq 6$, then {\small$R =\left\{(1,0,\dots,0,1)^\transpose, (0,\dots,0,1)^\transpose \right\}$.}
        
        \item If $n=7$,  then {\small\[R =\left\{(1,0,0,0,0,0,1)^\transpose,(0,0,0,0,0,0,1)^\transpose, (1,1,1,1,1,1,3)^\transpose \right\}.\]}
        
        \item If $n=8$,  then 
        {\small\begin{align*}
             R =\Big\{
             (1,0,0,0,0,0,0,1)^\transpose,
             (0,0,0,0,0,0,0,1)^\transpose,
             (1,1,1,1,1,1,1,3)^\transpose,
             (1,1,1,1,1,1,0,3)^\transpose 
             \Big\}.  
        \end{align*}}
    
        \item If $n=9$,  then  
        {\small\begin{align*}
            R =\Big\{
            &(1,0,0,0,0,0,0,0,1)^\transpose,
            (0,0,0,0,0,0,0,0,1)^\transpose, 
            (1,1,1,1,1,1,1,1,3)^\transpose, \\
            &(1,1,1,1,1,1,1,0,3)^\transpose, 
            (1,1,1,1,1,1,0,0,3)^\transpose,
            (2,2,2,2,2,2,2,1,6)^\transpose  \Big\}.
        \end{align*}}

        \item If $n=10$, then
        {\small\begin{align*}
            R =\Big\{
            (1,0,0,0,0,0,0,0,0,1)^\transpose, (1,1,1,1,1,1,1,1,1,3)^\transpose, (0,0,0,0,0,0,0,0,0,1)^\transpose,\\ (1,1,1,1,1,1,1,1,0,3)^\transpose, (1,1,1,1,1,1,1,0,0,3)^\transpose,(1,1,1,1,1,1,0,0,0,3)^\transpose, \\(2,2,2,2,2,2,2,2,1,6)^\transpose, 
            (2,2,2,2,2,2,2,1,0,6)^\transpose
            \Big\}.  
        \end{align*}}      
    \end{enumerate}
\end{theorem}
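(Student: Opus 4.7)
The plan is to combine three ingredients: (i) an iterative subtraction argument that writes any $\bfs \in T_n$ as a sum of primitive Pythagorean tuples plus at most one sporadic residue; (ii) the classification of primitive Pythagorean tuples from Lemma~\ref{cass (R,G)}; and (iii) a descent procedure placing every sporadic point in the $\ZZ_{\ge 0}$-span of the $G$-orbit of the listed sporadic roots.

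For (i), while $\bfs \in T_n$ is non-sporadic, I subtract a primitive Pythagorean $\bfp$ with $\bfs - \bfp \in T_n$; such a primitive $\bfp$ always exists, since if only a nonprimitive $\bfp = k\bfp'$ works then $\bfs - \bfp' = (\bfs - \bfp) + (k-1)\bfp' \in T_n$. The loop terminates because $\bfs_n$ strictly drops by at least $\bfp_n \ge 1$ at each step, so after finitely many iterations I reach either $\mathbf{0}$ or a sporadic residue. For (ii), the group $G$ in the theorem coincides with the group in Lemma~\ref{cass (R,G)}, since $A_n^+ = (Q_1 \cdots Q_{n-1}) A_n$ and all $Q_k$ lie in $G$; thus every primitive Pythagorean tuple is $g \cdot r$ for a Pythagorean root in $R$ (namely $(1,0,\dots,0,1)^\transpose$, plus $(1,1,\dots,1,3)^\transpose$ for $n = 10$), and any nonprimitive Pythagorean tuple is absorbed into the integer coefficient $\lambda$.

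The core of the proof is (iii). Any sporadic $\bfs$ can be written $\bfs = k\bfs'$ with $\bfs'$ primitive, and $\bfs'$ is itself sporadic: a decomposition $\bfs' = \bfp + \bft$ with $\bfp \in \partial T_n \cap \ZZ^n$ nonzero would scale to $\bfs = k\bfp + k\bft$, contradicting the sporadicity of $\bfs$. It therefore suffices to show every primitive sporadic lies in the $G$-orbit of a root in $R$. Using signed permutations $Q_k$ and transpositions $P_{1,j}$ in $G$, I may assume $s_1 \ge s_2 \ge \cdots \ge s_{n-1} \ge 0$. If $s_n = 1$, primitivity forces $\bfs' = (0,\dots,0,1)^\transpose$, a listed sporadic root. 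If $s_n > 1$, then $s_1 \ge 1$ (otherwise $\bfs' = (0,\dots,0,s_n)^\transpose$ fails primitivity), so Lemmas~\ref{lemma: stable height} and~\ref{heightdrop<9} apply: either $\bfs'$ is one of the tuples listed in Lemma~\ref{lemma: stable height} (which match exactly the remaining sporadic entries of $R$ in the theorem), or the height strictly drops under $(A_n^+)^{-1}$. In the latter case I renormalize via signed permutations and transpositions, which do not alter the last coordinate, and recurse. Termination is immediate since $s_n$ is a positive integer strictly decreasing at each non-terminal step.

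The main obstacle is the internal consistency of this descent. I must verify that every generator of $G$ preserves $T_n$, primitivity (because $G \subseteq \mathrm{GL}(n,\ZZ)$), and sporadicity (for $A_n^\pm$ this is the closure lemma proved earlier in the section, and for signed permutations and transpositions it is immediate from symmetry of the bilinear form); and I must match the stable-height tuples of Lemma~\ref{lemma: stable height}, together with $(0,\dots,0,1)^\transpose$, against the sporadic portion of $R$ stated for each $n \in \{3,\dots,10\}$. A subtle case is $n = 10$: the tuple $(1,1,\dots,1,3)^\transpose$ is Pythagorean (since $9 = 3^2$), so it contributes a second Pythagorean root rather than a sporadic root, while $(1,\dots,1,0,3)^\transpose$ and its cousins with more trailing zeros fall into Lemma~\ref{lemma: stable height}'s list and are the genuine sporadic roots; direct evaluation of the indefinite bilinear form confirms these assignments.
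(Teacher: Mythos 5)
Your proof is correct and follows essentially the same strategy as the paper's: decompose a non-sporadic element via iterative subtraction of (primitive) Pythagorean tuples until a sporadic residue remains, invoke Lemma~\ref{cass (R,G)} for the Pythagorean part, and then run the $(A_n^+)^{-1}$ descent on the primitive sporadic, using Lemma~\ref{heightdrop<9} to drop height until landing on either $(0,\dots,0,1)^\transpose$ or one of the stable-height tuples of Lemma~\ref{lemma: stable height}. You fill in a few details the paper leaves implicit (that the subtracted Pythagorean tuple can be taken primitive, that a sporadic $k\bfs'$ forces $\bfs'$ sporadic, that the normalized sporadic with $s_n=1$ must be $(0,\dots,0,1)^\transpose$, and that $(1,\dots,1,3)^\transpose$ in $n=10$ is a Pythagorean rather than sporadic root), but the key lemmas and the descent mechanism are identical to the paper's.
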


\begin{proof}
    This follows directly from Lemma~\ref{heightdrop<9} and that fact that $(0,0,\dots,0,1)$ is the sporadic of minimal height in this cone. Let $\bfs\in T_n$. If $\bfs$ is not sporadic, we can represent it as 
    \begin{align} \label{soc: decomposition}
        \bfs= \lambda_1 \bfp_1 + \lambda_2 \bfp_2 + \dots + \lambda_k \bfp_k + \lambda \bfp
    \end{align}
   where $\lambda, \lambda_i\in \ZZ_{\geq 0}$, each $\bfp_i$ is a primitive Pythagorean tuple and $\bfp$ is sporadic. By Lemma~\ref{cass (R,G)}, each $\bfp_i$ can be decomposed as $\bfp_i= G_i(1,0,\dots,0,1)^\transpose$ when $3\leq n < 10$ or $\bfp_i= G_i(1,0,\dots,0,1)^\transpose + \tilde{G_i}(1,1,\dots,1,3)^\transpose$ when $n=10$ where each $G_i,\tilde{G_i}\in G$. It remains to consider the sporadic $\bfp$. Given any primitive sporadic tuple $\bfs$, we can recover an element of $R$ as follows:
    \begin{enumerate}
        \item Multiply $\bfp$ by the appropriate permutation matrices $P_{i,j}$ and sign changing matrices $Q_j$ so that $\bfp$ has nonnegative entries and $p_1\geq\dots\geq p_{n-1}$. Call this resulting vector $\bfp'$.
        \item Multiply $\bfp'$ by $(A_n^+)^{-1}$ and repeat step 1 as necessary. By Lemma~\ref{heightdrop<9}, the height of the resulting vector will be strictly lower then that of the vector we started with or the resulting vector will belong to $R$.
        \item Repeat step 2 until the resulting vector $\bfr$ belongs to $R$. By Lemma~\ref{lemma: stable height}, the only possibilities for the resulting vector belong to $R$.
    \end{enumerate}
    This process gives the equality $\bfr=G_1\dots G_k \bfp$. If we let $G'=G_1\dots G_k$, then we have $(G')^{-1}\bfr=\bfp$.  Therefore, for $3\leq n \leq 10$, the conical semigroup $\text{SOC}(n)$ is $(R,G)$-finitely generated by the claimed $R$ and $G$.
\end{proof}

When $n=9$, the primitive sporadic point $(2,2,2,2,2,2,2,1,6)$ can be written as the sum of two sporadic points with smaller heights:
{\small\[(2,2,2,2,2,2,2,1,6) = (1,1,1,1,1,1,1,0,3)+             (1,1,1,1,1,1,0,0,3).\]} We can similarly decompose $(2,2,2,2,2,2,2,2,1,6)$ and $(2,2,2,2,2,2,2,1,0,6)$ for $n=10$. In this sense, these sporadic points fail to be minimal. Thus, if we remove them from the set of roots $R$, our semigroup $S$ remains $(R,G)$-finitely generated. However, when we remove these point from our root sets, our decomposition in equality (\ref{soc: decomposition}) requires modification and we must allow for multiple sporadic points in the expression.

\begin{remark}
    Lastly, it is worth noting that inequality \eqref{dim9problem} would fail in dimensions larger than 10. Thus, this line of argumentation would fail to produce results for $n>10$. 
\end{remark}


We can use Theorem~\ref{thm:SOC_full_version} to recover a partial converse of Lemma~\ref{sporadic=-1}. 

\begin{corollary}
    Let $3\leq n < 7$ and fix $\bfs\in T_n$. If $\bfs$ is a primitive sporadic, then $\<\bfs,\bfs\>=-1$.
\end{corollary}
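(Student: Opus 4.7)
My plan is to leverage two facts: first, that every matrix in the generating set of $G$ preserves the Minkowski bilinear form $\<\cdot,\cdot\>$, so $G$ acts by isometries of this form; second, that the classification in Theorem~\ref{thm:SOC_full_version} forces every primitive sporadic point (for $n\le 6$) to lie in the $G$-orbit of the single sporadic root $(0,\dots,0,1)^\transpose$, which satisfies $\<\cdot,\cdot\>=-1$.

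Concretely, I would proceed as follows. \emph{Step 1.} Check that each generator of $G$ preserves $\<\cdot,\cdot\>$. The matrix $A_n$ is by Definition~\ref{A_n definition} the reflection in a vector with respect to this bilinear form, hence is an isometry. Each sign-change matrix $Q_k$ (for $k<n$) and each coordinate swap $P_{1,j}$ (for $j<n$) also preserves $\<\cdot,\cdot\>$ because it only permutes or flips signs among the first $n-1$ coordinates, leaving the last coordinate untouched. Since $A_n^+=Q_1\cdots Q_{n-1}A_n$ is a composition of isometries, every $g\in G$ satisfies $\<g\bfx,g\bfy\>=\<\bfx,\bfy\>$ for all $\bfx,\bfy\in\RR^n$.

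\emph{Step 2.} For $3\le n\le 6$, recall that $R=\{(1,0,\dots,0,1)^\transpose,(0,\dots,0,1)^\transpose\}$ by Theorem~\ref{thm:SOC_full_version}. The first root is a primitive Pythagorean tuple (with $\<\cdot,\cdot\>=0$), hence not sporadic, whereas the second root is sporadic and satisfies $\<(0,\dots,0,1),(0,\dots,0,1)\>=-1$. Let $\bfs\in T_n$ be a primitive sporadic point. The reduction procedure in the proof of Theorem~\ref{thm:SOC_full_version}, relying on Lemma~\ref{heightdrop<9} (applicable because Lemma~\ref{lemma: stable height} produces no stable-height tuples when $n\le 6$), exhibits some $g\in G$ with $g\cdot\bfs\in R$. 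Since each generator of $G$ sends sporadic points to sporadic points (as established earlier in the section for $A_n^+$ and trivially for the $Q_k$ and $P_{i,j}$ by the symmetry of $T_n$), the image $g\cdot\bfs$ is itself sporadic, and therefore must equal the sporadic root $(0,\dots,0,1)^\transpose$ rather than the Pythagorean root.

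\emph{Step 3.} Combine Steps~1 and~2. Writing $\bfs=g^{-1}\cdot(0,\dots,0,1)^\transpose$ and using that $g^{-1}\in G$ preserves the bilinear form, we conclude
\[
\<\bfs,\bfs\>=\<g^{-1}\cdot(0,\dots,0,1)^\transpose,\,g^{-1}\cdot(0,\dots,0,1)^\transpose\>=\<(0,\dots,0,1)^\transpose,(0,\dots,0,1)^\transpose\>=-1,
\]
which is the desired equality. The main thing to be careful about is verifying that the whole group $G$ (not just $A_n$) preserves $\<\cdot,\cdot\>$ and that the reduction genuinely terminates at the sporadic root; both follow cleanly from the machinery already developed in Section~\ref{sec:soc}, so no significant obstacle is expected once these two observations are made explicit.
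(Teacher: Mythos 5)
Your proof is correct and follows essentially the same route as the paper: Theorem~\ref{thm:SOC_full_version} (via the reduction procedure in its proof) places every primitive sporadic point for $n\le 6$ in the $G$-orbit of $(0,\dots,0,1)^\transpose$, and since $G$ preserves $\<\cdot,\cdot\>$, the value $-1$ is inherited. The paper's proof is more compressed---it simply states these two ingredients without the generator-by-generator isometry verification of your Step~1 or the explicit ``must land on the sporadic root'' remark of your Step~2---but the argument is identical in substance.
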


\begin{proof}
    Let $\bfs\in T_n$ be sporadic. Using theorem~\ref{thm:SOC_full_version}, we can express $\bfs$ as \[\bfs= G'(0,\dots,0,1)^\transpose\] for $G'\in G$. As $\<G' \bfs, G' \bfs\>=\<\bfs,\bfs\>$ for all $G'\in G$, we see that \[\<\bfs,\bfs\>=\<G'(0,\dots,0,1)^\transpose, G'(0,\dots,0,1)^\transpose\>= \<(0,\dots,0,1)^\transpose, (0,\dots,0,1)^\transpose\>=-1.\]
\end{proof}    

In particular, this converse fails to hold in dimensions $\ge 7$ as we have $\bfr\in R$ such that $\<\bfr,\bfr\>\neq -1$.


\section{Not All Conical Semigroups are $(R,G)$-Finitely Generated} \label{notallconesrgfg}

The next example shows that not every conical semigroup is $(R,G)$-finitely generated.

\begin{example}\label{ex:NonFinGen}
    Let $C:=\{(x,y)\in\RR^2:0\le y\le \alpha x\}$ be a cone generated by the vectors $v_1:=(1,0)$ and $v_2:=(1,\alpha)$, where $\alpha>0$ is an irrational number.
    We claim that the conical semigroup $S_C:=C\cap\ZZ^2$ is not $(R,G)$-finitely generated for any finitely generated group $G$ acting linearly on $C$ and any finite set $R\subset S_C$.
    {\color{blue}
    It suffices to show that any $g\in G$ that acts linearly on $C$ and restricts to $S_C$ must be the identity. Specifically, if $g$ is such that $g(c_1u_1+c_2u_2)=c_1g(u_1)+c_2g(u_2)$ for any $u_1,u_2\in\RR_{\ge0}$ and $u_1,u_2\in C$, and $g(S_C)\subseteq S_C$, then $g=I$.}


    
    To see this, note that $C$ is simplicial as $v_1$ and $v_2$ are linearly independent, so any vector $v\in C$ can be written uniquely as a linear combination of $v_1$ and $v_2$.
    Thus any {\color{blue}linear action by $g$} on $C$ can be represented by a matrix
    \[
        U_g=\begin{bmatrix}
            u_{11} & u_{12} \\
            u_{21} & u_{22}
        \end{bmatrix}\in\RR^{2\times2}.
    \]
    The determinant $\det{U_g}=u_{11}u_{22}-u_{21}u_{12}$, which is nonzero due to the definition of the group action $g$, and $U_g^{-1}=\dfrac{1}{\det{U_g}}\begin{bmatrix} u_{22} & -u_{12} \\ -u_{21} & u_{11}\end{bmatrix}$.
    Since $v_1=(1,0)\in S_C$, we must have 
    \[
        U_g v_1=\begin{bmatrix}
            u_{11} \\ u_{21}
        \end{bmatrix}\in S_C,
    \]
    which implies that $u_{11},u_{21}\in\ZZ$ and 
    \begin{equation}\label{ex:ineq_1}
        0\le u_{21}\le\alpha u_{11}.
    \end{equation}
    Let $w:=(\lceil1/\alpha\rceil,1)\in S_C$, we must have
    \[
        U_g {\color{blue}w}=\begin{bmatrix}
            \lceil1/\alpha\rceil u_{11}+u_{12} \\ \lceil1/\alpha\rceil u_{21}+ u_{22}
        \end{bmatrix}\in S_C,
    \]
    which implies that $u_{12},u_{22}\in\ZZ$.
    At this point, we see that $U_g\in\ZZ^{2\times2}$.
    Now take a sequence of vectors $w_k:=(x_k,y_k)\in S_C$ such that $\lim_{k\to\infty}\dfrac{y_k}{x_k}=\alpha$, the existence of which is guaranteed by the Dirichlet's approximation theorem.
    Since $U_g w_k\in S_C$, we must have $0\le u_{21}x_k+u_{22}y_k\le\alpha(u_{11}x_k+u_{12}y_k)$, which in turn gives \begin{equation}\label{ex:ineq_2}
       0\le u_{21}+\alpha u_{22}\le\alpha(u_{11}+\alpha u_{12}) 
    \end{equation} 
    by taking $k\to\infty$.
    Moreover, since $$U_g^{-1}v_1=\frac{1}{\det{U_g}}\begin{bmatrix} u_{22} \\ -u_{21}\end{bmatrix}\in S_C,$$ 
    we must have 
    \begin{equation}\label{ex:ineq_3}
    0\le(\det{U_g})^{-1}(-u_{21})\le\alpha(\det{U_g})^{-1}u_{22},
    \end{equation}
    which implies that
    $\dfrac{1}{\det{U_g}}(\alpha u_{22}+u_{21})\ge0$.
    By~\eqref{ex:ineq_2}, $u_{21}+\alpha u_{22}\ge0$, and the inequality is strict because of the irrationality of $\alpha$.
    Consequently, \eqref{ex:ineq_3} shows that $\det{U_g}>0$.
    Thus~\eqref{ex:ineq_3} further implies that $u_{21}\le0$, so $u_{21}=0$ by~\eqref{ex:ineq_1}.
    Now consider 
    $$U_g^{-1}w_k=\dfrac{1}{\det{U_g}}\begin{bmatrix} u_{22}x_k-u_{12}y_k \\ u_{11}y_k \end{bmatrix}\in S_C,$$
    so again by taking $k\to\infty$, we have $0\le\alpha u_{11}\le\alpha(u_{22}-\alpha u_{12})$.
    Together with~\eqref{ex:ineq_2}, which becomes $u_{22}\le u_{11}+\alpha u_{12}$, we see that $u_{22}=u_{11}+\alpha u_{12}$.
    Again using the irrationality of $\alpha$, we must have $u_{12}=0$, and consequently $u_{11}=u_{22}$.
    Finally, note that $U_g^{-1}v_1=\begin{bmatrix} 1/u_{11} \\ 0\end{bmatrix}\in S_C$, so $1/u_{11}\in\ZZ$ and thus $u_{11}=\pm1$.
    From~\eqref{ex:ineq_1}, we conclude that $u_{11}=u_{22}=1$, and $U_g=\begin{bmatrix} 1 & 0 \\ 0 & 1 \end{bmatrix}$ is the identity matrix.
    Since the generator $v_2$ is not a rational vector, we know that $S_C$ itself is not finitely generated, so it is not $(R,G)$-finitely generated either.
\end{example}

\section{Applications: Total Dual Integrality, Chvátal-Gomory Closures, and Integer Rank} \label{applications}
{\color{blue} To prove Theorems~\ref{thm:TDI} and~\ref{thm:ICR}, we use the notation \(\bbR^{\oplus I}\) (or simply \(\bbR^I\)) to denote an \(\bbR\)-vector space for a possibly infinite index set $I$, where each vector \((a_i)_{i\in I}\in\bbR^I\) has all but finitely many zero coordinates \(a_i=0\). }

\begin{proof}[Proof for Theorem~\ref{thm:TDI}]
    The containment \(\subseteq\) follows from definition.
    To see the other containment, take any halfspace \(H:=\{\bfx\in\bbR^m:{\color{blue}\bfu}^\transpose \bfx\le v\}\) for some \((\bfu,v)\in\bbZ^m\times\bbR\) such that \(C\subseteq H\).
    Then by the full dimensionality of \(C\), we have
    \begin{equation*}
        \begin{aligned}
        v\ge&\sup_{x}\left\{\bfu^\transpose \bfx:\bfc-\calA(\bfx)\in C\right\}
         =\inf_{y}\left\{y(\bfc):\calA^*(y)=\bfu,\ y\in C^*\right\}.
         \end{aligned}
    \end{equation*}
    Using the TDI assumption, the infimum is attained by some \(y^*\in C^*\cap\bbZ^N\).
    As {\color{blue}$S_{C^*}$} is $(R,G)$-finitely generated, $r_1,\dots,r_k\in R$, $g_1,\dots,g_k\in G$, and \(\lambda_1,\dots,\lambda_k\in\bbZ_{\ge0}\) such that
    \[
        y^*=\sum_{j=1}^k \lambda_j(g_j\cdot r_j),
    \]
    for some \(k\ge 1\).
    Consequently, we have
    \begin{equation*}
        \lfloor v\rfloor\ge\lfloor y^*(\bfc)\rfloor=\biggl\lfloor\sum_{j=1}^k \lambda_j(g_j\cdot r_j)^\transpose \bfc\biggr\rfloor\ge\sum_{j=1}^k \lambda_j\lfloor(g_j\cdot r_j)^\transpose \bfc\rfloor.
    \end{equation*}
    {\color{blue} Also as a consequence of the \((R,G)\)-finite generation of $S_{C^*}$, determining $\inf_{y}\left\{y(\bfc):\calA^*(y)=\bfu,\ y\in C^*\right\}$ is equivalent to solving the following (semi-infinite) linear optimization problem:
    \begin{align*}
        \inf_{\lambda}\quad & \sum_{r\in R,g\in G}\lfloor(g\cdot r)^\transpose \bfc\rfloor \lambda_{r,g}\\
        \mathrm{s.t.}\quad & \calA^*\biggl(\sum_{r\in R,g\in G}\lambda_{g,r}(g\cdot r)\biggr) =\bfu,\\
        & \lambda\in \bbR^{\oplus{\{(g,r):~r\in R, g\in G\}}}_{\ge0}.
    \end{align*}
    Note that \((\lambda_1,\dots,\lambda_k)\) is a feasible solution to this problem. By weak duality of the semi-infinite optimization problem, we have
    \begin{equation*}
        \sum_{j=1}^k \lambda_j\lfloor(g_j\cdot r_j)^\transpose \bfc\rfloor\ge
        \sup_x\left\{\bfu^\transpose \bfx:\calA^*(g\cdot r)^\transpose \bfx\le\lfloor(g\cdot r)^\transpose \bfc\rfloor,\ \forall\,r\in R,g\in G\right\}.
    \end{equation*}
    }
    Therefore, the inequality \(\bfu^\transpose \bfx\le \lfloor v\rfloor\) is implied by the inequalities \((g\cdot r)^\transpose \calA(\bfx)=\calA^*(g\cdot r)^\transpose \bfx\le\lfloor(g\cdot r)^\transpose \bfc\rfloor\) for \(r\in R,g\in G\).
    Since the halfspace \(H\) is arbitrary, we conclude that 
    \[
        \CGclos(Z)\supseteq\left\{\bfx\in\bbR^m:(g\cdot r)^\transpose\calA(\bfx)\le\lfloor(g\cdot r)^\transpose \bfc\rfloor,\quad\forall\,r\in R,g\in G\right\}. \qedhere
    \]
\end{proof}

\begin{proof}[Proof for Theorem~\ref{thm:ICR}]
    Suppose \(B=\{b_i\}_{i\in I}\subset S_C\) for some possibly infinite index set \(I\) is an integer generating set for \(S_C\).
    
    Consider a semi-infinite linear optimization problem
    \begin{equation}\label{eq:ICR_LP}
        \begin{aligned}
            \max\quad&\sum_{i\in I}\lambda_i\\
            \mathrm{s.t.}\quad& \sum_{i\in I}\lambda_ib_i=s,\\
                              & (\lambda_i)_{i\in I}\in\bbR^{\oplus I}_{\ge0}.
        \end{aligned}
    \end{equation}
    Since \(C\) is pointed, \(B\) satisfies the ``opposite sign condition,'' meaning that whenever \(\sum_{i\in I}\mu_ib_i=0\) for some nonzero \((\mu_i)_{i\in I}\in\bbR^{\oplus I}\), we have \(\mu_i<0<\mu_j\) for some \(i,j\in I\).
    Thus by~\cite[Theorem 2]{charnes1963duality}, we know that~\eqref{eq:ICR_LP} has an extreme point solution, denoted as \((\lambda^*_i)_{i\in I}\) with \(J:=\{i\in I:\lambda^*_i>0\}\).
    By~\cite[Theorem 1]{charnes1963duality}, the vectors \(\{\lambda_i\}_{i\in J}\subset S_C\) associated with the extreme point solution must be linearly independent.
    Thus \(\vert J\vert\le N\).

    For each \(i\in J\), let \(z_i:=\lfloor\lambda_i^*\rfloor\) and \(y_i:=\lambda_i^*-z_i\).
    We claim that \(\sum_{i\in J}y_i<N-1\).
    Given this claim, the theorem is proved as follows.
    The vector \(s':=s-\sum_{i\in J}z_ib_i\in C\cap\bbZ^N=S_C\) can be written as integer combination of \(B\) by definition, that is, there exists an index set \(J'\subset I\) with \(b'_i\in B\), \(\lambda'_i\in\bbZ_{\ge1}\) for each \(i\in J'\) such that \(s'=\sum_{i\in J'}\lambda'_ib'_i\).
    This implies that 
    \begin{equation*}
        s=s'+\sum_{i\in J}z_ib_i=\sum_{i\in J}z_ib_i+\sum_{j\in J'}\lambda'_ib'_i.
    \end{equation*}
    We see that \(\sum_{i\in J}z_i+\sum_{j\in J'}\lambda'_i\le\sum_{i\in J}\lambda^*_i\) by the optimality of \((\lambda^*_i)_{i\in I}\).
    Consequently, \(\sum_{j\in J'}\lambda'_j\le \sum_{i\in J}y_i<N-1\), and thus \(s\) can be written as an integer sum of at most \(\vert J\vert+N-2\le 2N-2\) generators from \(B\).

    It remains to prove the claim, \(\sum_{i\in J}y_i<N-1\). 
    Note that if \(\vert J\vert\le N-1\) this is trivially true because \(y_i<1\) by definition.
    So we may assume that \(\vert J\vert=N\) and denote \(J=\{1,\dots,N\}\) without loss of generality.
    Moreover, if the convex hull \(V:=\conv\{b_1,\dots,b_N\}\) has a nonempty intersection with \(B\), say \(b_{N+1}\in V\cap B\) with \(b_{N+1}=\sum_{i=1}^{N}\gamma_ib_i\) for some \(0<\gamma_1,\dots,\gamma_N<1\), \(\sum_{i=1}^{N}\gamma_i=1\), then we can write \(s\) as
    \begin{equation*}
        s=\epsilon b_{N+1}+\sum_{i=1}^{N}(\lambda_i^*-\epsilon\gamma_i)b_i,
    \end{equation*}
    where \(\epsilon:=\frac{\lambda_\iota^*}{\gamma_\iota}\) for some \(\iota\in\mathrm{argmin}\{\frac{\lambda_i^*}{\gamma_i}:i=1,\dots,N\}\).
    This shows that \((\mu^*_i)_{i\in I}\) with \(\mu_i^*:=\lambda^*_i-\epsilon\gamma_i\) for each \(i\in J\setminus\{\iota\}\), \(\mu_{N+1}^*:=\epsilon\), and \(\mu_i^*=0\) for any \(i\notin J_1:=J\cup\{N+1\}\setminus\{\iota\}\), is also an optimal solution to~\eqref{eq:ICR_LP}.  
    Thus by replacing \(J\) with \(J_1\), we can assume that the intersection \(V\cap B=\varnothing\).
    Under this assumption, let \(s'':=\sum_{i=1}^{N}(1-y_i)b_i=\sum_{i=1}^{N}b_i-s'\in C\cap\bbZ^N\).
    Again by the definition of \(B\), we can write \(s''=\sum_{j\in J''}\lambda_j''b_j''\), for some finite subset \(J''\subset I\), \(b_j''\in B\) and \(\lambda''_j\in\bbZ_{\ge1}\) for each \(j\in J''\).
    Note that
    \begin{equation*}
        s=\delta s''+\sum_{i\in J}(\lambda_i^*-\delta(1-y_i))b_i
        =\delta\sum_{i\in J''}\lambda_i''b_i+\sum_{i\in J}(\lambda_i^*-\delta(1-y_i))b_i,
    \end{equation*}
    for some sufficiently small \(\delta>0\),
    so by the optimality of \((\lambda_i^*)_{i\in I}\), we must have \(1\le\sum_{i\in J''}\lambda_i''\le\sum_{i\in J}(1-y_i)\).
    If \(\sum_{i=1}^{N}y_i\ge N-1\), then this implies that \(\sum_{i\in J}(1-y_i)=1\), which is a contradiction with our assumption \(V\cap B=\varnothing\).
    Thus we must have \(\sum_{i=1}^{N}y_i< N-1\).
\end{proof}

\begin{remark}
    If we apply the theorem to the case \(S_C=\calS_+^n(\bbZ)\), then \(N=\dim{\calS^n(\bbR)}=\binom{n+1}{2}\).
    The bound on the ICR in this case is \(2N-2=n^2+n-2\), which grows \emph{quadratically} with \(n\) as opposed to the linear growth in the case of the usual Carathéodory rank of positive semidefinite matrices.
    If we apply the theorem to the case \(T_n=\mathrm{SOC}(n)\cap\bbZ^n\), then \(N=\dim{\mathrm{SOC}(n)}=n\).
    The ICR in this case is \(2N-2=2n-2\).
\end{remark}

\bibliography{isdp}

\end{document}